\newtheorem{theorem}{Theorem}[section]
\newtheorem{lemma}[theorem]{Lemma}
\newtheorem{remark}[theorem]{Remark}
\newtheorem{question}{Question}
\newcommand{\beq}{\begin{eqnarray*}}
\newcommand{\eeq}{\end{eqnarray*}}
\newcommand{\beqn}{\begin{eqnarray}}
\newcommand{\eeqn}{\end{eqnarray}}
\numberwithin{equation}{section}
\begin{document}

%%%%% To ease editing, for IMPAN journals add:

%\baselineskip=17pt

%%%%%%%%%%%

%% In the running head, replace first names by initials
%% and give an abbreviation of the title.

\title[Characterization of transportation-information inequalities]{A new characterization of quadratic transportation-information inequalities}

\author[Y. LIU]{Yuan LIU}
\address{Yuan LIU, Institute of Applied Mathematics, Academy of Mathematics and Systems Science,
Chinese Academy of Sciences, Beijing 100190, China}
\email{liuyuan@amss.ac.cn}

\date{\today}

\begin{abstract}
It is known that a quadratic transportation-information inequality $\mathrm{W\hspace*{-0.5mm}_2I}$ interpolates between the Talagrand's inequality $\mathrm{W\hspace*{-0.5mm}_2H}$ and the log-Sobolev inequality (LSI for short). The aim of this paper is threefold:
\begin{enumerate}
\item To prove the equivalence of $\mathrm{W\hspace*{-0.5mm}_2I}$ and the Lyapunov condition, which gives a new characterization inspired by Cattiaux-Guillin-Wu \cite{CGW}.
\item To prove the stability of $\mathrm{W\hspace*{-0.5mm}_2I}$ under bounded perturbations, which gives a transference principle in the sense of Holley-Stroock.
\item To prove $\mathrm{W\hspace*{-0.5mm}_2H}$ through a restricted $\mathrm{W\hspace*{-0.5mm}_2I}$, which gives a counterpart of the restricted LSI presented by Gozlan-Roberto-Samson \cite{GRS}.
\end{enumerate}
\end{abstract}

\subjclass[2010]{26D10, 60E15, 60J60}

\keywords{transportation-information inequality, Talagrand's inequality, log-Sobolev inequality, Lyapunov condition, transference principle}

\maketitle

\allowdisplaybreaks

\section{Introduction}
 \label{Intro}
 \setcounter{equation}{0}

Transport inequalities have been a very active family of functional inequalities for years, with their profound connections to measure concentration phenomenon and large deviation principle of Markov processes. We refer to two monographies by Villani \cite{Villani1, Villani2} and two surveys by Gozlan-L\'{e}onard \cite{GL-survey} and Cattiaux-Guillin \cite{CG} on this subject with references therein. In this paper, we will investigate three questions about the quadratic transportation-information inequality $\mathrm{W\hspace*{-0.5mm}_2I}$, which interpolates between the Talagrand's inequality $\mathrm{W\hspace*{-0.5mm}_2H}$ and the log-Sobolev inequality (LSI for short).

Let's address some basics around the above objects. Denote by $(E,d)$ a metric space equipped with the collection $\mathcal{P}(E)$ of all probability measures on the Borel $\sigma$-field. Define the $L^p$ Wasserstein (transportation) distance between two probability measures $\nu, \mu\in \mathcal{P}(E)$, for any $p\geqslant 1$, by
    \[ W_p(\nu,\mu) = \left(\inf\limits_{\pi\in \mathcal{C}(\nu,\mu)} \int_{E\times E} d^p(x,y) \pi(\mathrm{d}x, \mathrm{d}y)\right)^{1/p}, \]
where $\mathcal{C}(\nu,\mu)$ denotes the set of couplings between $\nu$ and $\mu$, that is to say the set of probability measures $\pi$ on $E\times E$ with marginals $\nu$ and $\mu$. For simplicity, our framework is specified as the following. Take $E$ to be a connected complete Riemannian manifold of finite dimension, $d$ the geodesic distance, $\mathrm{d}x$ the volume measure, $\mu(\mathrm{d}x) = e^{-V(x)}\mathrm{d}x \in \mathcal{P}(E)$ with $V\in C^1(E)$, $\mathrm{L}=\Delta - \nabla V\cdot \nabla$ the $\mu$-symmetric diffusion operator with domain $\mathbb{D}(\mathrm{L})$, $\Gamma(f,g) = \nabla f\cdot \nabla g$ the carr\'{e} du champ operator and $\mathcal{E}$ the Dirichlet form with domain $\mathbb{D}(\mathcal{E})$. Denote $\mu(h) := \int h \mathrm{d}\mu$. It's known that the integration by parts formula reads
   \[ \int \nabla f \cdot \nabla g \;\mathrm{d}\mu = -\int f \mathrm{L}g  \;\mathrm{d}\mu, \ \forall f\in \mathbb{D}(\mathcal{E}), g\in \mathbb{D}(\mathrm{L}). \]
The reader is referred to Bakry-Gentil-Ledoux \cite{BGL} for a detailed presentation.
  
In the sequel, we focus on the case $p=2$ of particular interest. Twenty years ago, Talagrand \cite{Talagrand} introduced the celebrated transportation-entropy inequality for any $\nu$ with respect to the Gaussian measure $\mu$ (hence, it is called the Talagrand's inequality)
  \begin{align}
    W_2(\nu,\mu)\leqslant \sqrt{2C H(\nu|\mu)}, \tag{$\mathrm{W\hspace*{-0.5mm}_2H(C)}$}
  \end{align}
where $H(\nu|\mu)$ denotes the relative entropy equal to either $\nu (\log\frac{\mathrm{d}\nu}{\mathrm{d}\mu})$ if $\nu$ is absolutely continuous to $\mu$ or $\infty$ otherwise. For general $\mathrm{W\hspace*{-0.5mm}_2H}$, various characterizations have been found out, such as the Bobkov-G\"{o}tze's infimum-convolution criterion (see \cite{BG}), the Gozlan-L\'{e}onard's large deviation and concentration criteria (see \cite{GL}), etc.

As a counterpart, the transportation-information inequality was introduced much later by Guillin-L\'{e}onard-Wu-Yao \cite{GLWY}, which substituted Fisher-Donsker-Varadhan information for relative entropy, i.e.
  \begin{align}
    W_2(\nu,\mu)\leqslant \sqrt{2C I(\nu|\mu)}, \tag{$\mathrm{W\hspace*{-0.5mm}_2I(C)}$}
  \end{align}
where $I(\nu|\mu)$ is associated to some Dirichlet form $\mathcal{E}$ with domain $\mathbb{D}(\mathcal{E})$ in $L^2(\mu)$ as
  \[ I(\nu|\mu) = \left\{ \begin{array}{ll} \mathcal{E}(\sqrt{f}, \sqrt{f}),& \textrm{ if } f=\frac{\mathrm{d}\nu}{\mathrm{d}\mu} \textrm{ with }  \sqrt{f}\in \mathbb{D}(\mathcal{E});\\  \infty,& \textrm{ otherwise.} \end{array} \right. \]
Some criteria for $\mathrm{W\hspace*{-0.5mm}_2I}$ have also been worked out correspondingly by \cite{GLWY}.

The reference measure $\mu$ is usually regarded as the unique invariant distribution of (symmetric) diffusion process. However in some practical cases, $\mu$ is unknown except its existence. For this reason, it is natural to present  suitable hypotheses on the infinitesimal generator instead of its equilibrium limit. Going to this direction, Cattiaux-Guillin-Wang-Wu \cite{CGWW} drew in the Lyapunov condition to study the super Poincar\'{e} inequalities.  Afterwards, Cattiaux-Guillin-Wu \cite{CGW} derived $\mathrm{W\hspace*{-0.5mm}_2H}$ from the Lyapunov condition, which  even worked for LSI
  \[ H(\nu|\mu)\leqslant 2CI(\nu|\mu)\]
with additional assumptions on the Bakry-Emery's curvature. 

More precisely as \cite{CGW}, say $W\geqslant 1$ is a Lyapunov function if there exist two constants $c>0, b\geqslant 0$ and some $x_0\in E$ such that in the sense of distribution
  \beqn
    \mathrm{L}W \leqslant \left(-cd^2(x,x_0) + b\right)W. \label{eqLya0}
  \eeqn
There were also some variants of (\ref{eqLya0}) by substituting $d^2(\cdot,\cdot)$ with other functionals of distance, such as \cite{BCG, CGZ} for investigating relations between the Poincar\'{e} inequalities and weak Lyapunov conditions.

\begin{theorem} (\cite[Theorem 1.2]{CGW}) \label{thmW2H}
Under the Lyapunov condition (\ref{eqLya0}),
\begin{enumerate}
\item There exists a constant $C_1>0$ such that $\mu$ verifies the inequality $\mathrm{W\hspace*{-0.5mm}_2H}(C_1)$.

\item There exists a constant $C_2>0$ such that $\mu$ verifies the inequality $\mathrm{LSI}(C_2)$ provided that the Bakry-Emery's curvature is bounded from below.
\end{enumerate}
\end{theorem}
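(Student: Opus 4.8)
The plan is to deduce both statements from three ingredients extracted from the Lyapunov condition (\ref{eqLya0}): a Poincar\'e inequality for $\mu$, a Gaussian-type integrability $\int e^{\lambda d^2(x,x_0)}\,\mathrm d\mu<\infty$, and the local functional inequalities that every ball $B(x_0,R)$ automatically carries because $\overline{B(x_0,R)}$ is compact and $e^{-V}$ is bounded away from $0$ and $\infty$ there. The Poincar\'e inequality governs the transport of densities that stay close to their $\mu$-mean, the Gaussian tail together with a local inequality on balls disposes of the mass that is large or far from $x_0$, and for part (2) the curvature lower bound serves only to keep the constants of the local inequalities under control as $R\to\infty$.

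The first step is the algebraic mechanism behind the Lyapunov method. For bounded $f\in\mathbb D(\mathcal E)$, the pointwise identity $\Gamma(f,f)-\Gamma(W,f^2/W)=|\nabla f-(f/W)\nabla W|^2\ge 0$ together with the integration by parts formula gives $-\int(\mathrm{L}W/W)\,f^2\,\mathrm d\mu=\int\Gamma(W,f^2/W)\,\mathrm d\mu\le\mathcal E(f,f)$. Since $(-cd^2+b)W\le-\tfrac c2 d^2(\cdot,x_0)\,W+b\,\mathbf{1}_{\{cd^2\le 2b\}}W$, the hypothesis (\ref{eqLya0}) reads $-\mathrm{L}W/W\ge\tfrac c2 d^2(\cdot,x_0)-b\,\mathbf{1}_{\{cd^2\le 2b\}}$ distributionally, and we obtain the localized weighted inequality
\[ \frac c2\int d^2(x,x_0)\,f^2\,\mathrm d\mu\ \le\ \mathcal E(f,f)+b\int_{\{cd^2\le 2b\}}f^2\,\mathrm d\mu,\qquad f\in\mathbb D(\mathcal E). \]
Applied to (truncations of) $f=\exp\big(\tfrac\lambda2 d^2(\cdot,x_0)\big)$, with $|\nabla d|\le 1$ so that $\mathcal E(f,f)\le\lambda^2\int d^2 f^2\,\mathrm d\mu$, the gradient term is absorbed on the left for $\lambda<\sqrt{c/2}$ while the remaining integral is finite (its integrand is compactly supported), giving $\int e^{\lambda d^2(x,x_0)}\,\mathrm d\mu<\infty$. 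On the other hand, off $B(x_0,R)$ one has $d^2\ge R^2$, so the weighted inequality bounds $\int_{B(x_0,R)^c}f^2\,\mathrm d\mu$ by $\tfrac{2}{cR^2}\mathcal E(f,f)$ plus a term supported on $\overline{B(x_0,R)}$; combining this for $R$ large with the local Poincar\'e inequality on $\overline{B(x_0,R)}$ yields a global Poincar\'e inequality for $\mu$, which is exactly the Lyapunov-versus-Poincar\'e scheme of \cite{BCG}.

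For part (1) I would then glue these estimates into $\mathrm{W\hspace*{-0.5mm}_2H}$. Given $\nu=f\mu$ with $H(\nu|\mu)<\infty$, fix a large ball $B=B(x_0,R)$ and a threshold $M>1$, and split $\nu$ into (i) its restriction to $\{x\in B:\ f(x)\le M\}$, of bounded density and staying close to $\mu$ on $B$, and (ii) a remainder carried by $\{f>M\}\cup B^c$, whose total mass is small and controlled by $H(\nu|\mu)$ together with $\mu(\{f>M\}\cup B^c)$. Component (i) is transported towards $\mu$ at a cost controlled, via the Poincar\'e inequality (for instance through $W_2\lesssim\|{\cdot}\|_{\dot H^{-1}(\mu)}$ for bounded densities, $\|{\cdot}\|_{\dot H^{-1}}^2\lesssim C_{\mathrm{Poi}}\,\chi^2(\cdot\,|\mu)$, and $\chi^2\lesssim_M H$), by a constant times its relative entropy; component (ii) is moved to $x_0$ and spread out, the cost being estimated over dyadic annuli $A_k=\{2^{k-1}R\le d(\cdot,x_0)<2^kR\}$ with the help of $\mu(A_k)\le e^{-\lambda 4^{k-1}R^2}\int e^{\lambda d^2}\,\mathrm d\mu$ and the set-entropy inequality $\nu(A_k)\log\frac{\nu(A_k)}{\mu(A_k)}\le H(\nu|\mu)+e^{-1}$. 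Assembling the couplings through the triangle inequality for $W_2$ gives $W_2^2(\nu,\mu)\le 2C_1 H(\nu|\mu)$, i.e.\ $\mathrm{W\hspace*{-0.5mm}_2H}(C_1)$. This recombination is the step I expect to be the main obstacle: because $W_2$ is quadratic, $M$ and $R$ must be chosen so that the annulus contributions $4^kR^2\,\nu(A_k)$ and the truncation errors are \emph{simultaneously} absorbed by $H(\nu|\mu)$ uniformly in $f$, which rests on the sharp Gaussian decay coming from (\ref{eqLya0}) and on splitting the annuli according to whether $\nu(A_k)$ dominates $\mu(A_k)$ (Pinsker's inequality on the near-diagonal part, the coarse entropy bound on the rest); this is very much in the spirit of the restricted inequalities of \cite{GRS}.

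For part (2), the lower bound on the Bakry--Emery curvature makes the curvature-dimension estimates of \cite{BGL} available on balls, so that $B(x_0,R)$ carries a local logarithmic Sobolev inequality whose constant grows with $R$ at a rate governed by the curvature bound --- slowly enough to be beaten by the Gaussian decay $e^{-\lambda r^2}$ of $\mu$. Rerunning the gluing of the previous paragraph with $\mathrm{LSI}$ in place of $\mathrm{W\hspace*{-0.5mm}_2H}$ --- now it is the local log-Sobolev constant, rather than a transport cost, that the tail must beat, while the bounded-density bulk uses the Poincar\'e inequality together with Rothaus's lemma to tighten the local defect --- yields $\mathrm{LSI}(C_2)$. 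Alternatively, one may feed the Gaussian integrability into F.-Y.\ Wang's criterion, namely that under a lower bound on the Bakry--Emery curvature the condition $\int e^{\lambda d^2(x,x_0)}\,\mathrm d\mu<\infty$ already forces $\mathrm{LSI}$; or combine $\mathrm{W\hspace*{-0.5mm}_2H}(C_1)$ from part (1) and the Poincar\'e inequality with the HWI inequality of \cite{BGL}, absorbing the curvature term and tightening by Rothaus's lemma.
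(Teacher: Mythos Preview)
Your route to part~(1) is genuinely different from the paper's, and the obstacle you flag is real rather than a technicality.

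The paper works entirely on the dual side. It verifies the Bobkov--G\"otze criterion $\mu(e^{\delta Q_{1}h})\le 1$ by setting $\phi(x,t)=\delta t\,Q_th(x)$, $\Lambda(t)=\mu(e^{\phi})$, and differentiating via the Hamilton--Jacobi equation for $Q_th$. The derivative is controlled by splitting over $\{e^{\phi/2}\le N\lambda\}$ and its complement (with $\lambda=\mu(e^{\phi/2})$): on the first set Lemma~\ref{lemBarthe} reduces the entropy-like term to a variance; on the second set the weighted bound $\int d_0^2 g^2\,\mathrm d\mu\lesssim\mathcal E(g,g)+\mu(g^2)$ of Lemma~\ref{lemLya} and the Poincar\'e inequality finish the job. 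This yields $\Lambda'\le t^{-1}(\Lambda\log\Lambda)\vee 0$, hence $\Lambda(1)\le 1$. No primal splitting of $\nu$, no gluing of couplings, no choice of $M,R$ depending on $f$.

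Your primal scheme, by contrast, runs into the well-known lack of a tightening lemma for $\mathrm{W_2H}$. The Gaussian integrability already gives the defective bound $W_2^2(\nu,\mu)\le 2\lambda^{-1}H(\nu|\mu)+C'$ via $\int d_0^2\,\mathrm d\nu\le\lambda^{-1}\bigl(H(\nu|\mu)+\log\mu(e^{\lambda d_0^2})\bigr)$; the issue is removing $C'$, and Poincar\'e alone does not do this for $W_2H$ the way Rothaus does for LSI. Your annular bookkeeping does not escape: from $\nu(A_k)\log\mu(A_k)^{-1}\le H(\nu|\mu)+\log 2$ and $\log\mu(A_k)^{-1}\gtrsim\lambda\,4^{k}R^2$ one gets $4^kR^2\,\nu(A_k)\lesssim H(\nu|\mu)/\lambda$ \emph{per annulus}, so the series over $k$ diverges. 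Adaptive choices of $M,R$ in terms of $H$ do not make the constant uniform in $\nu$. This is precisely why \cite{CGW}, the present paper, and \cite{GRS} all pass through a dual characterization (Bobkov--G\"otze or restricted LSI) rather than a primal coupling argument; your closing remark ``in the spirit of \cite{GRS}'' is apt, but \cite{GRS} themselves do not glue couplings either.

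For part~(2) the paper offers no independent argument (it simply cites \cite{CGW}). Both of your alternatives are correct and standard: Wang's criterion (curvature lower bound plus $\mu(e^{\lambda d_0^2})<\infty$ forces LSI) and the HWI route (curvature bound plus $\mathrm{W_2H}$ from part~(1), tightened by Rothaus via the Poincar\'e inequality) each work.
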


In the preprint \cite{Liu}, the author tried to show that if the Bakry-Emery's curvature has a lower bound, the LSI and Lyapunov condition (with a slight adjustment) are equivalent. On the other hand, due to the well-known HWI inequality from Otto-Villani \cite{Otto-Villani}, it follows that  LSI and $\mathrm{W\hspace*{-0.5mm}_2I}$ are also equivalent under the same curvature condition. These results lead to our first question.

\begin{question} \label{ques1}
Is it possible to get $\mathrm{W\hspace*{-0.5mm}_2I}$ through the Lyapunov condition without any assumption on the curvature? And what about the converse implication?
\end{question}

For some technical reasons, certain regularity condition is usually imposed on the density of $\mu$. One way of removing such a constraint is to handle a nice model space firstly and then extend the associated functional inequalities to a general setting through perturbations. For example, Holley-Stroock \cite{HS} proved that if $\mu$ satisfies a LSI and $\tilde{\mu}$ is a bounded variant of $\mu$ with $\frac{\mathrm{d}\tilde{\mu}}{\mathrm{d}\mu}$ and $\frac{\mathrm{d}\mu}{\mathrm{d}\tilde{\mu}} \in L^\infty$, then $\tilde{\mu}$ verifies a LSI too. This is called the stability or transference principle of inequalities.

For LSI, perturbations on the entropy and information can be respectively and directly controlled. Somehow it is tough to deal with the Wasserstein distance, so that the stability of $\mathrm{W\hspace*{-0.5mm}_2H}$ remained open until it was attacked by Gozlan-Roberto-Samson \cite{GRS} via a new characterization, the so called restricted LSI (rLSI for short). They defined $f$ to be a $K$-semi-convex function if for any $x,y\in \mathbb{R}^n$ (which can be extended to manifolds or length spaces, etc.)
  \[ f(y) \geqslant f(x) + \nabla f(x) \cdot (y-x) - \frac{K}{2}d^2(x,y), \]
see also the classical semi-convexity in the textbook of Evans \cite[Section 3.3]{Evans}. Say $\mu$ verifies a rLSI with constant $C>0$ if for all $K$-semi-convex $f$ with $0\leqslant K< C^{-1}$
  \begin{align}
    \mathrm{Ent}_\mu(e^f)\leqslant \frac{2C}{(1-KC)^2} \int |\nabla f|^2e^f \mathrm{d}\mu, \tag{$\mathrm{rLSI(C)}$}
  \end{align}
where $\mathrm{Ent}_\mu(e^f) = \mu(fe^f) - \mu (e^f) \log\mu (e^f)$, and actually $\int |\nabla f|^2e^f \mathrm{d}\mu = 4I(e^f)$. We quote partial results from \cite{GRS} as follows.

\begin{theorem} (\cite[Theorem 1.5]{GRS}) \label{thmW2HStable}
The next two statements are equivalent:
\begin{enumerate}
\item There exists a constant $C_1>0$ such that $\mu$ verifies the inequality $\mathrm{W\hspace*{-0.5mm}_2H}(C_1)$.

\item There exists a constant $C_2>0$ such that $\mu$ verifies the inequality $\mathrm{rLSI(C_2)}$.
\end{enumerate}
A quantitative relationship between control constants is the following: from $(1)$ to $(2)$ holds $C_1=C_2$, and conversely, from $(2)$ to $(1)$ holds $C_1=8C_2$.
\end{theorem}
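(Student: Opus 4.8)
The plan is to route both implications through the Kantorovich dual (Bobkov--G\"{o}tze) reformulation of $\mathrm{W\hspace*{-0.5mm}_2H}$ together with the Hamilton--Jacobi (Hopf--Lax) semigroup, exploiting that inf- and sup-convolutions carry a quantitatively controlled semi-convexity. Recall that $\mathrm{W\hspace*{-0.5mm}_2H}(C)$ is equivalent to the infimum-convolution inequality $\int e^{Q^{C}_{1}g}\,\mathrm{d}\mu\leqslant e^{\mu(g)}$ for all bounded continuous $g$, where $Q^{C}_{t}g(x)=\inf_{y}\{g(y)+\tfrac{1}{2Ct}d^{2}(x,y)\}$ solves $\partial_{t}Q^{C}_{t}g=-\tfrac{1}{2C}|\nabla Q^{C}_{t}g|^{2}$ with $Q^{C}_{t}g\to g$ as $t\to0^{+}$, and dually $R^{C}_{t}g(x)=\sup_{y}\{g(y)-\tfrac{1}{2Ct}d^{2}(x,y)\}$ solves the ascending equation. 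The structural fact that makes $\mathrm{rLSI}(C)$ and $\mathrm{W\hspace*{-0.5mm}_2H}(C)$ match is: if $g$ is $K_{0}$-semi-convex then $Q^{C}_{t}g$ is $\tfrac{K_{0}}{1-CtK_{0}}$-semi-convex as long as $CtK_{0}<1$, while $R^{C}_{t}g$ is $\tfrac{1}{Ct}$-semi-convex for \emph{every} bounded $g$; and a function is $K$-semi-convex with $KC<1$ precisely when it is $\tfrac{1}{2C}d^{2}$-convex, so $Q^{C}_{1}(R^{C}_{1}g)=g$ for such $g$, whereas $Q^{C}_{1}g\leqslant g$ and $R^{C}_{1}(Q^{C}_{1}g)\leqslant g$ hold in general. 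Thus the admissible class ``$K$-semi-convex with $K<C^{-1}$'' in $\mathrm{rLSI}(C)$ is exactly the class fixed by the Hopf--Lax double transform at scale $C$. On a manifold all of this must be read in the viscosity sense and handled near the cut locus, using $\nabla^{2}\tfrac{1}{2}d^{2}(\cdot,y)\leqslant(1+o(1))\,\mathrm{Id}$ close to the diagonal.

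For $(2)\Rightarrow(1)$, with $C_{1}=8C_{2}$: by the previous paragraph it suffices to produce the (suitably restricted) infimum-convolution inequality, which I would attack by the Hamilton--Jacobi scheme of Bobkov--Gentil--Ledoux. Along the appropriate Hopf--Lax flow $u(t,\cdot)$, introduce a renormalized free energy $\Psi(t)=\tfrac{1}{\alpha(t)}\log\int e^{\alpha(t)u(t,\cdot)}\,\mathrm{d}\mu$ with a monotone weight $\alpha$ calibrated so that $\Psi$ interpolates between the two sides of that inequality; a computation of the shape
\[
\Psi'(t)=\frac{\alpha'(t)}{\alpha(t)^{2}}\cdot\frac{\mathrm{Ent}_{\mu}\!\big(e^{\alpha(t)u(t,\cdot)}\big)}{\int e^{\alpha(t)u(t,\cdot)}\mathrm{d}\mu}\;-\;\frac{1}{2C_{1}}\cdot\frac{\int|\nabla u(t,\cdot)|^{2}e^{\alpha(t)u(t,\cdot)}\mathrm{d}\mu}{\int e^{\alpha(t)u(t,\cdot)}\mathrm{d}\mu}
\]
shows that the required monotonicity of $\Psi$ reduces to a log-Sobolev inequality for $\alpha(t)u(t,\cdot)$. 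By the propagation lemma this function is semi-convex with an explicit constant $K(t)$ of order $\alpha(t)/(C_{1}t)$, so $\mathrm{rLSI}(C_{2})$ is available exactly where $K(t)<C_{2}^{-1}$; controlling the complementary small-$t$ window by a crude estimate (there $\|u(t,\cdot)-u(0,\cdot)\|_{\infty}=O(t)$) and optimizing the renormalization against the blow-up factor $\big(1-K(t)C_{2}\big)^{-2}$ in $\mathrm{rLSI}(C_{2})$, I expect the argument to close precisely when $C_{1}=8C_{2}$.

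For $(1)\Rightarrow(2)$, with $C_{2}=C_{1}$: given a $K$-semi-convex $f$ with $KC_{1}<1$, put $\nu=e^{f}\mu/\mu(e^{f})$. The natural device is a \emph{restricted} HWI inequality, $H(\nu|\mu)\leqslant W_{2}(\nu,\mu)\sqrt{I(\nu|\mu)}+\tfrac{K}{2}W_{2}(\nu,\mu)^{2}$ with $I$ the Otto--Villani-normalized Fisher information, obtained by running an interpolation from $\mu$ to $\nu$ along which the logarithmic density stays semi-convex and carrying out the second-variation computation of the entropy---here the hypothesis that $f$ is $K$-semi-convex plays exactly the role a Ricci--Hessian lower bound plays in the classical argument. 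Feeding this into $\mathrm{W\hspace*{-0.5mm}_2H}(C_{1})$ in the form $W_{2}(\nu,\mu)^{2}\leqslant2C_{1}H(\nu|\mu)$ and solving the resulting quadratic gives $H(\nu|\mu)\leqslant\tfrac{2C_{1}}{(1-KC_{1})^{2}}I(\nu|\mu)$, which is $\mathrm{rLSI}(C_{1})$ after rewriting in terms of $\mathrm{Ent}_{\mu}(e^{f})$ and $\int|\nabla f|^{2}e^{f}\mathrm{d}\mu$. A purely Hopf--Lax alternative is to apply the infimum-convolution inequality to the preimage $g=R^{C_{1}}_{1}f$ (so $Q^{C_{1}}_{1}g=f$ by the structural lemma), obtaining $\int e^{\alpha f}\mathrm{d}\mu\leqslant e^{\mu(R^{C_{1}}_{1}(\alpha f))}$ for $\alpha\in[0,1]$, and differentiating this one-parameter family using the first-order structure of the maximizer map of $R^{C_{1}}_{1}$ on semi-convex functions together with the pointwise bound $R^{C_{1}}_{1}(\alpha f)-\alpha f\leqslant\tfrac{C_{1}(1+\alpha C_{1}K)}{2}\,\alpha^{2}|\nabla f\circ w^{\ast}_{\alpha}|^{2}$.

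I expect three genuine obstacles. The hardest is a curvature-free justification of the restricted HWI---equivalently, in the Hopf--Lax route, of the change of variables turning $|\nabla f\circ w^{\ast}_{\alpha}|^{2}$ into the true Fisher-information term: with no Ricci or Hessian-of-$V$ hypothesis available one must extract the needed second-variation estimate from the semi-convexity of $f$ alone, and on a Riemannian manifold this forces direct handling of the cut locus and of the non-smoothness of $d^{2}$ rather than an appeal to soft comparison geometry. The second is the sharp constant $8$ in $(2)\Rightarrow(1)$: the weight $\alpha$ and the split time must be fixed by a genuine one-variable optimization, and the reduction of general bounded (Lipschitz, continuous) test functions to a class of semi-convex ones---needed so the propagation lemma applies along the whole flow, since the Hopf--Lax image of a general Lipschitz function need not be semi-convex---has to be arranged compatibly with the characterization of $\mathrm{W\hspace*{-0.5mm}_2H}$ and with the threshold $K<C^{-1}$. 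The third, more routine but still delicate, is passing among the several dual reformulations of $\mathrm{W\hspace*{-0.5mm}_2H}$ used above in mutually compatible ways.
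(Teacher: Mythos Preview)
This theorem is not proved in the paper at all: it is quoted verbatim as \cite[Theorem~1.5]{GRS} and used as a black box (in the proofs of Theorems~\ref{thmW2IStable} and~\ref{thmRestrictedW2I}), so there is no ``paper's own proof'' to compare your proposal against. Your sketch is therefore an attempt to reconstruct the original Gozlan--Roberto--Samson argument, not anything the present paper supplies.

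That said, your outline is broadly faithful to the actual \cite{GRS} strategy. For $(1)\Rightarrow(2)$ with $C_2=C_1$, the restricted HWI route is exactly what they do: the key input is that for $K$-semi-convex $f$ one has $H(e^f\mu|\mu)\leqslant W_2(e^f\mu,\mu)\sqrt{I(e^f\mu|\mu)}+\tfrac{K}{2}W_2(e^f\mu,\mu)^2$, and combining with $\mathrm{W_2H}(C_1)$ closes the quadratic to give $\mathrm{rLSI}(C_1)$. Your worry about a ``curvature-free justification'' is legitimate but is precisely the content of \cite[Proposition~1.4]{GRS}; the point is that semi-convexity of $f$ substitutes for a Bakry--\'Emery lower bound in the second-variation computation, and on manifolds one argues in a length-space/metric sense via \cite{GRS-HJ}. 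For $(2)\Rightarrow(1)$ with $C_1=8C_2$, the Hamilton--Jacobi/Bobkov--Gentil--Ledoux scheme you describe is again the right mechanism; the factor $8$ indeed emerges from the calibration of the weight $\alpha(t)$ against the semi-convexity constant $K(t)$ of the evolving function, and from the need to stay inside the admissible window $K(t)C_2<1$. The paper's own Section~4, in particular the proof of Theorem~\ref{thmRestrictedW2I}, implements a very similar Hamilton--Jacobi computation (with $\psi(x,t)=\delta t\,P_{1-t}h(x)-\delta t\,\mu(P_1h)$ and the observation that $P_{1-t}h$ is $(1-t)^{-1}$-semi-convex), which you may find a useful template.

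One caution: several of your structural claims are stated more sharply than they hold. The assertion ``$Q^C_t g$ is $\tfrac{K_0}{1-CtK_0}$-semi-convex whenever $g$ is $K_0$-semi-convex'' is not the standard propagation lemma (inf-convolution improves, not worsens, semi-concavity; the relevant fact is rather that $R^C_t g$ is always $(Ct)^{-1}$-semi-convex), and the identity $Q^C_1(R^C_1 g)=g$ requires care about which convexity hypothesis is in force. These are exactly the places where \cite{GRS} works hard, so if you intend a self-contained proof you should consult their Lemmas~5.2--5.3 rather than rely on the heuristic formulations in your proposal.
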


Now our second question arises.
\begin{question} \label{ques2}
What about the stability of $\mathrm{W\hspace*{-0.5mm}_2I}$ in the sense of Holley-Stroock?
\end{question}

It is known that $\mathrm{W\hspace*{-0.5mm}_2I}$ implies $\mathrm{W\hspace*{-0.5mm}_2H}$ according to Guillin-L\'{e}onard-Wang-Wu \cite[Theorem 2.4]{GLWW}, but it is not clear yet to what extent they are different. Moerover, $\mathrm{W\hspace*{-0.5mm}_2I}$ is an interpolation between $\mathrm{W\hspace*{-0.5mm}_2H}$ and LSI, which suggests that this relationship should be kept for the restricted type inequalities. So our third question comes out.
\begin{question} \label{ques3}
What is the restricted $\mathrm{W\hspace*{-0.5mm}_2I}$ equivalent to $\mathrm{W\hspace*{-0.5mm}_2H}$?
\end{question}

\bigskip

This paper will answer the above three questions. With a slight adjustment to (\ref{eqLya0}), say $W>0$ is a Lyapunov function if $W^{-1}$ is locally bounded and there exist two constants $c>0, b\geqslant 0$ and some $x_0\in E$ such that in the sense of distribution
  \beqn
    \mathrm{L}W \leqslant \left(-cd^2(x,x_0) + b\right)W. \label{eqLya}
  \eeqn
Here we use $W>0$ instead of $W\geqslant 1$ in (\ref{eqLya0}), but the technique in the proof of Bakry-Barthe-Cattiaux-Guillin \cite[Theorem 1.4]{BBCG} still works if $W^{-1}$ is locally bounded (not necessary to request a uniform lower bound). We will go back to this point in Section 2.

What we prove for the first question is the following.
\begin{theorem} \label{thmW2ILya}
The next two statements are equivalent:
\begin{enumerate}
\item There exists a constant $C>0$ such that $\mu$ verifies the inequality $\mathrm{W\hspace*{-0.5mm}_2I}(C)$.

\item There exists a Lyapunov function $W>0$ that verifies Condition (\ref{eqLya}).
\end{enumerate}
\end{theorem}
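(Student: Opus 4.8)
The plan is to prove the two implications separately, following the philosophy of Cattiaux-Guillin-Wu but replacing the entropy side by the Fisher information side throughout.

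For the direction $(2)\Rightarrow(1)$, I would start from the Lyapunov inequality (\ref{eqLya}) and extract from it a local Poincaré-type/transport estimate. Concretely, for a probability density $f=\mathrm{d}\nu/\mathrm{d}\mu$, I would test (\ref{eqLya}) against $f/W$ (using that $W^{-1}$ is locally bounded to justify the integration by parts from the excerpt), obtaining a bound of the form $\int c\,d^2(x,x_0)\,f\,\mathrm{d}\mu \leqslant b + \int \nabla(f/W)\cdot\nabla W\,\mathrm{d}\mu$, and then the last term must be dominated by a combination of $I(\nu|\mu)=\mathcal{E}(\sqrt f,\sqrt f)$ and a constant times $\int f\,d^2\,\mathrm{d}\mu$ absorbable into the left side, plus a term controlled on a bounded ball where a local Poincaré inequality holds (valid on compact subsets of the manifold). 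This yields a "transport-information" control of $\int d^2(x,x_0)\,\mathrm{d}\nu$ by $I(\nu|\mu)$ plus a constant. To convert such a first-moment/quadratic bound on the distance to the reference point into the full Wasserstein estimate $W_2(\nu,\mu)\leqslant\sqrt{2CI(\nu|\mu)}$, I would invoke the known route (as in \cite{GLWY,CGW}): combine the Lyapunov-derived bound with a local Poincaré inequality and use the tensorization/perturbation machinery, or alternatively route through the criterion of \cite{GLWY} that characterizes $\mathrm{W_2I}$ via such integrability conditions. The main obstacle here is precisely this last conversion: controlling the genuinely nonlocal quantity $W_2(\nu,\mu)$ rather than a one-point moment, which is exactly the difficulty that made the stability of $\mathrm{W_2H}$ hard; I expect to need a restricted-inequality argument or the dual (Bobkov-Götze / Kantorovich-Rubinstein) formulation to push this through.

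For the direction $(1)\Rightarrow(2)$, I would show that $\mathrm{W_2I}(C)$ forces the existence of a Lyapunov function, and the natural candidate is $W=e^{\varepsilon d^2(x,x_0)/2}$ (or $W=e^{\varepsilon\, u}$ for a smoothed version of $\tfrac12 d^2(\cdot,x_0)$) for a small enough $\varepsilon>0$. Computing $\mathrm{L}W/W$ gives $\varepsilon\,\mathrm{L}u + \varepsilon^2|\nabla u|^2 = \varepsilon\,\mathrm{L}u + \varepsilon^2 d^2(x,x_0)\cdot(1+o(1))$, so I need an upper bound on $\mathrm{L}u = \Delta u - \nabla V\cdot\nabla u$; the term $\Delta(\tfrac12 d^2)$ is controlled in the sense of distributions by the Laplacian comparison theorem away from cut locus issues, while $-\nabla V\cdot\nabla u$ is the term that must be made very negative. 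Here $\mathrm{W_2I}(C)$ enters: since $\mathrm{W_2I}(C)$ implies $\mathrm{W_2H}(C)$ by \cite[Theorem 2.4]{GLWW}, and $\mathrm{W_2H}$ gives Gaussian concentration $\mu(d^2(x,x_0)\geqslant r)\leqslant e^{-r/(2C)+\text{const}}$, one deduces exponential integrability $\mu(e^{\varepsilon d^2})<\infty$ for $\varepsilon<1/(2C)$, which in turn forces the effective drift $-\nabla V\cdot\nabla(\tfrac12 d^2)$ to dominate, in an averaged/distributional sense, by $-c\,d^2(x,x_0)+b$ for suitable $c,b$. Turning this integrated statement into the pointwise distributional inequality (\ref{eqLya}) with $W>0$ and $W^{-1}$ locally bounded (which holds automatically for $W=e^{\varepsilon d^2/2}$) is the technical heart of this direction.

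I expect the main obstacle overall to be the $(2)\Rightarrow(1)$ direction, specifically the passage from a Lyapunov-type energy estimate to the global Wasserstein bound. I would handle this by proving an intermediate "local $\mathrm{W_2I}$" on balls, combining it with the exponential decay furnished by the Lyapunov function (as in \cite{BBCG,CGWW}) to control the contribution of the complement of a large ball, and then gluing the pieces; the restricted-$\mathrm{W_2I}$ viewpoint developed later in the paper for Question 3 should provide the right language for this gluing, so I would cross-reference it. Throughout I will track the constants so that the equivalence is quantitative, since the downstream applications (stability, restricted inequalities) need explicit dependence of $C$ on $(c,b,x_0)$ and vice versa.
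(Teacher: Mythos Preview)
Your proposal has a genuine gap in the direction $(1)\Rightarrow(2)$, and your plan for $(2)\Rightarrow(1)$, while starting correctly, does not identify the mechanism that actually closes the argument.

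\textbf{On $(1)\Rightarrow(2)$.} The explicit candidate $W=e^{\varepsilon d^2(\cdot,x_0)/2}$ cannot work in the stated generality. The condition $\mathrm{W_2I}(C)$ is stable under bounded perturbations of the potential (this is precisely Theorem~\ref{thmW2IStable}), but the inequality $\mathrm{L}W\leqslant(-cd_0^2+b)W$ for your $W$ is equivalent to a pointwise drift bound on $\nabla V\cdot\nabla d_0^2$, which is \emph{not} stable under bounded perturbations: adding a bounded but highly oscillatory term to $V$ destroys any such pointwise control while leaving $\mathrm{W_2I}$ intact. Gaussian integrability of $\mu$ is an integrated statement and does not yield the distributional inequality you need. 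The paper takes a completely different route: it \emph{constructs} $W$ as the weak solution of the Schr\"odinger equation $-\mathrm{L}u+(-cd_0^2+b)u=1$. The role of $\mathrm{W_2I}$ is to furnish the quadratic form estimate $\int u^2 d_0^2\,\mathrm{d}\mu\leqslant 8C\,\mathcal{E}[u]+2\mu(u^2)\mu(d_0^2)$ (obtained via Kantorovich duality and Lemma~\ref{lemConvolution}), which makes the bilinear form coercive; Lax--Milgram then gives existence, the weak maximum principle gives $u\geqslant0$, and local elliptic regularity plus a barrier argument give $u>0$ everywhere with $u^{-1}$ locally bounded.

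\textbf{On $(2)\Rightarrow(1)$.} Your first step---testing the Lyapunov inequality to obtain $\int d_0^2\,\mathrm{d}\nu\leqslant c^{-1}I(\nu|\mu)+b/c$---is exactly Lemma~\ref{lemLya} and is correct. But the conversion to a genuine $W_2$ bound is not achieved by ``local $\mathrm{W_2I}$ on balls plus gluing''; $W_2$ does not localise that way. The paper's argument is: the Lyapunov condition already implies $\mathrm{W_2H}(C_T)$ by \cite{CGW}, hence the Bobkov--G\"otze criterion $\mu(e^{Q_{C_T}h})\leqslant1$. One then writes $\tfrac{1}{2C_T}W_2(\nu,\mu)^2=\sup_h\int f^2 Q_{C_T}h\,\mathrm{d}\mu$ with $\mathrm{d}\nu=f^2\mathrm{d}\mu$, introduces the level set $A=\{f\leqslant N\mu(f)\}$, and splits the integral into three pieces. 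On $A$ one uses $\log a-\log b\leqslant(a-b)/b$ together with Bobkov--G\"otze and an elementary log-bound (Lemma~\ref{lemBarthe}) to reduce to the variance of $f$; on $A^c$ one uses the crude bound $Q_{C_T}h\leqslant C_T^{-1}(d_0^2+\mu(d_0^2))$ and then precisely your moment estimate from Lemma~\ref{lemLya}. The Poincar\'e inequality (also a consequence of the Lyapunov condition) finishes the job. The key idea you are missing is this level-set decomposition combined with the already-available $\mathrm{W_2H}$.
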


On the second question, our method is not to look for an equivalent restricted type LSI for $\mathrm{W\hspace*{-0.5mm}_2I}$ , but turns to applying Theorem \ref{thmW2ILya} and \ref{thmW2HStable} directly.
\begin{theorem} \label{thmW2IStable}
Let $\tilde{\mu}\in \mathcal{P}(E)$ be absolutely continuous to $\mu$ with $M^{-1}\leqslant \frac{\mathrm{d}\tilde{\mu}}{\mathrm{d}\mu} \leqslant M$ for some constant $M\geqslant 1$. Then $\tilde{\mu}$ verifies the inequality $\mathrm{W\hspace*{-0.5mm}_2I}$ if so does $\mu$.
\end{theorem}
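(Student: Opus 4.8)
The plan is to dodge the Wasserstein distance entirely: instead of estimating $W_2(\cdot,\tilde\mu)$ under the perturbation, I would carry across the two \emph{equivalent} descriptions of $\mathrm{W\hspace*{-0.5mm}_2I}$ provided by Theorems~\ref{thmW2ILya} and \ref{thmW2HStable}. Write $\mathrm{d}\tilde\mu/\mathrm{d}\mu=e^{-\psi}$ up to a normalising constant, so that the assumption $M^{-1}\le\mathrm{d}\tilde\mu/\mathrm{d}\mu\le M$ says exactly that $\psi$ is bounded with oscillation at most $2\log M$; when the density is smooth, the perturbed generator is $\tilde{\mathrm{L}}=\mathrm{L}-\nabla\psi\cdot\nabla$.

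\emph{Transferring Talagrand's inequality.} Since $\mu$ verifies $\mathrm{W\hspace*{-0.5mm}_2I}$ it verifies $\mathrm{W\hspace*{-0.5mm}_2H}$ by \cite[Theorem~2.4]{GLWW}, hence $\mathrm{rLSI}$ by Theorem~\ref{thmW2HStable}. The point is that $\mathrm{rLSI}$ is stable under bounded perturbations in the Holley--Stroock style: the class of $K$-semi-convex test functions is intrinsic to the metric $d$, hence unchanged, while $M^{-1}\le\mathrm{d}\tilde\mu/\mathrm{d}\mu\le M$ gives $\mathrm{Ent}_{\tilde\mu}(e^f)\le M\,\mathrm{Ent}_\mu(e^f)$ and $\int|\nabla f|^2 e^f\,\mathrm{d}\mu\le M\int|\nabla f|^2 e^f\,\mathrm{d}\tilde\mu$; combining these, $\tilde\mu$ verifies $\mathrm{rLSI}$ with the control constant inflated by $M^2$, at the price of the reduced range $K<(M^2C)^{-1}$. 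Applying Theorem~\ref{thmW2HStable} the other way, $\tilde\mu$ verifies $\mathrm{W\hspace*{-0.5mm}_2H}$.

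\emph{Transferring the Poincar\'e inequality, and concluding.} The inequality $\mathrm{W\hspace*{-0.5mm}_2I}(\mu)$ implies a spectral gap for $\mu$ (see \cite{GLWY}, or linearise around $\mu$), and a spectral gap is stable under bounded perturbations by the classical Holley--Stroock lemma, so $\tilde\mu$ too has a Poincar\'e inequality. I would then feed these two facts back through Theorem~\ref{thmW2ILya}: in producing a Lyapunov function, the argument there needs only Talagrand-type tail control --- which supplies the quadratic confinement $-c\,d^2(x,x_0)$ via the square-exponential integrability $\int e^{\varepsilon d^2(x,x_0)}\,\mathrm{d}\tilde\mu<\infty$ implied by $\mathrm{W\hspace*{-0.5mm}_2H}(\tilde\mu)$ --- together with a Poincar\'e inequality for the local part. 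Running this for $\tilde\mu$ should yield $\tilde W>0$ with $\tilde W^{-1}$ locally bounded and $\tilde{\mathrm{L}}\tilde W\le(-\tilde c\,d^2(x,x_0)+\tilde b)\tilde W$ in the sense of distributions, i.e.\ Condition~(\ref{eqLya}) for $\tilde{\mathrm{L}}$; Theorem~\ref{thmW2ILya} then gives $\mathrm{W\hspace*{-0.5mm}_2I}(\tilde\mu)$.

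\textbf{The main obstacle} is exactly this last transfer of the Lyapunov condition. One cannot keep $\mu$'s Lyapunov function $W$, because $\tilde{\mathrm{L}}W=\mathrm{L}W-\nabla\psi\cdot\nabla W$ and the extra drift $\nabla\psi\cdot\nabla W$ is governed by $\nabla\psi$, which an $L^\infty$ perturbation does not control at all; $W$ will generically fail the Lyapunov inequality for $\tilde{\mathrm{L}}$. This is precisely why one must detour through the restricted $\mathrm{LSI}$ of \cite{GRS} --- the only known route for carrying Talagrand's inequality across a bounded perturbation, and the genuinely delicate ingredient --- and through classical Holley--Stroock for the spectral gap, before rebuilding a Lyapunov function from $\tilde\mu$'s own tail and spectral data. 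Carrying out that reconstruction quantitatively, and without any curvature assumption on the manifold, is where the work concentrates.
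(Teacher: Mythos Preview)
Your first step agrees with the paper: transfer $\mathrm{W\hspace*{-0.5mm}_2H}$ to $\tilde\mu$ via the Gozlan--Roberto--Samson stability of $\mathrm{rLSI}$, and transfer the Poincar\'e inequality by Holley--Stroock. The divergence is in how to finish.

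The paper never constructs a Lyapunov function for $\tilde{\mathrm L}$. Instead it keeps $\mu$'s Lyapunov function (available from Theorem~\ref{thmW2ILya} applied to $\mu$, since $\mu$ has $\mathrm{W\hspace*{-0.5mm}_2I}$) and reruns Part~1 of the proof of Theorem~\ref{thmW2ILya} directly for $\tilde\mu$: decompose $\int f^2 Q_{\tilde C_T}h\,\mathrm d\tilde\mu$ exactly as in (\ref{eqDecom1})--(\ref{eqDecom3}). The only place the Lyapunov condition is used is estimate (\ref{eqDecom3}), via Lemma~\ref{lemLya}; the paper fixes this by sandwiching $M^{-1}\le\mathrm d\tilde\mu/\mathrm d\mu\le M$ to pass from $\tilde\mu$ to $\mu$, applying Lemma~\ref{lemLya} for $\mu$, and passing back. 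No Lyapunov inequality for $\tilde{\mathrm L}$ is needed or proved.

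Your proposed step~3 has a genuine gap. In Part~2 of the proof of Theorem~\ref{thmW2ILya}, the construction of the Lyapunov function does not rely merely on ``Talagrand-type tail control'' plus a spectral gap. The decisive input is the coercivity estimate
\[
\mu(u^2 d_0^2)\ \leqslant\ 8C\,\mathcal E[u] + 2\,\mu(u^2)\,\mu(d_0^2),
\]
obtained by applying the Kantorovich dual formula and Lemma~\ref{lemConvolution} to the full $\mathrm{W\hspace*{-0.5mm}_2I}$ inequality; Gaussian integrability from $\mathrm{W\hspace*{-0.5mm}_2H}$ does not yield a bound of this shape, because it controls $\mu(e^{\delta d_0^2})$ but says nothing about $\int u^2 d_0^2\,\mathrm d\mu$ in terms of the Dirichlet energy of $u$. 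Consequently you cannot rebuild a Lyapunov function for $\tilde\mu$ from $\mathrm{W\hspace*{-0.5mm}_2H}(\tilde\mu)$ and Poincar\'e alone via that argument. If one could, the implication $\mathrm{W\hspace*{-0.5mm}_2H}+\mathrm{PI}\Rightarrow\mathrm{W\hspace*{-0.5mm}_2I}$ would hold in general, collapsing the hierarchy that the paper explicitly leaves open. The remedy is exactly what the paper does: do not try to produce a Lyapunov function for $\tilde{\mathrm L}$; use the one for $\mathrm L$ and compare integrals via the bounded density.
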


Similarly to \cite{GRS}, we introduce if $\log\frac{\mathrm{d}\nu}{\mathrm{d}\mu}$ is $K$-semi-convex with $0\leqslant K< C^{-1}$
  \begin{align}
    W_2(\nu,\mu)\leqslant \sqrt{\frac{4C^2}{(1-KC)^2} I(\nu|\mu)}. \tag{$\mathrm{rW\hspace*{-0.5mm}_2I(C)}$}
  \end{align}
The above constant is chosen for convenience. For the third question, we prove

\begin{theorem} \label{thmRestrictedW2I}
The next two statements are equivalent:
\begin{enumerate}
\item There exists a constant $C_1>0$ such that $\mu$ verifies the inequality $\mathrm{W\hspace*{-0.5mm}_2H}(C_1)$.

\item There exists a constant $C_2>0$ such that $\mu$ verifies the inequality $\mathrm{rW\hspace*{-0.5mm}_2I}(C_2)$.
\end{enumerate}
A quantitative relationship between control constants is the following: from $(1)$ to $(2)$ holds $2C_1=C_2$, and conversely, from $(2)$ to $(1)$ holds $C_1=4C_2$.
\end{theorem}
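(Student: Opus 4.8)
The plan is to prove the two implications of Theorem~\ref{thmRestrictedW2I} by comparing the restricted inequality $\mathrm{rW\hspace*{-0.5mm}_2I}$ with the restricted log-Sobolev inequality $\mathrm{rLSI}$ of Gozlan-Roberto-Samson and then invoking Theorem~\ref{thmW2HStable}. The starting observation is that the ansatz $\nu = e^f \mathrm{d}\mu$ (up to normalization) makes the two restricted inequalities live on the same class of test functions: $\log\frac{\mathrm{d}\nu}{\mathrm{d}\mu}$ being $K$-semi-convex is exactly the hypothesis under which $\mathrm{rLSI}$ is formulated, and $I(\nu|\mu) = \frac14\int|\nabla f|^2 e^f\mathrm{d}\mu$ with the normalization $\mu(e^f)=1$. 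So at heart this is a bridge between a transportation-information statement and an entropy-information statement on a restricted class, and the only genuine analytic input needed to cross that bridge is an $\mathrm{W\hspace*{-0.5mm}_2H}$-type estimate plus the elementary inequality $H(\nu|\mu)\leqslant$ (something) that lets one trade entropy for information.

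For the direction $(1)\Rightarrow(2)$: assuming $\mathrm{W\hspace*{-0.5mm}_2H}(C_1)$, Theorem~\ref{thmW2HStable} gives $\mathrm{rLSI}(C_1)$, i.e. for $K$-semi-convex $f$ with $0\leqslant K<C_1^{-1}$ one has $\mathrm{Ent}_\mu(e^f)\leqslant \frac{2C_1}{(1-KC_1)^2}\cdot 4 I(e^f)$. Combine this with the unrestricted consequence $\mathrm{W\hspace*{-0.5mm}_2H}(C_1)$ again, but in the form of the Otto-Villani/HWI-free route: actually the cleaner path is to use directly that $\mathrm{W\hspace*{-0.5mm}_2H}(C_1)$ says $W_2(\nu,\mu)^2\leqslant 2C_1 H(\nu|\mu)$, and then substitute the $\mathrm{rLSI}$ bound on $H(\nu|\mu)=\mathrm{Ent}_\mu(e^f)$ (after normalizing so $\mu(e^f)=1$) to obtain $W_2(\nu,\mu)^2\leqslant 2C_1\cdot\frac{2C_1}{(1-KC_1)^2} \cdot 4 I(\nu|\mu)/4 \cdot(\dots)$; chasing the constants, with $C_2=2C_1$ one should land exactly on $W_2(\nu,\mu)^2\leqslant \frac{4C_2^2}{(1-KC_2)^2}I(\nu|\mu)$ — though one must be slightly careful because the semi-convexity threshold in $\mathrm{rLSI}$ is $K<C_1^{-1}$ while in $\mathrm{rW\hspace*{-0.5mm}_2I}(C_2)$ it is $K<C_2^{-1}=(2C_1)^{-1}$, so the class of admissible $f$ on the target side is \emph{smaller}, which is harmless. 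The constant bookkeeping is the place where I expect to have to be most careful, in particular making the factor $(1-KC_2)^{-2}$ appear with the right $C_2$ rather than $(1-KC_1)^{-2}$; this may require the elementary monotonicity $(1-KC_1)^{-2}\leqslant(1-KC_2)^{-2}$ for $C_2\geqslant C_1$ together with absorbing a numerical constant.

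For the converse $(2)\Rightarrow(1)$: assuming $\mathrm{rW\hspace*{-0.5mm}_2I}(C_2)$, I want to produce $\mathrm{rLSI}$ and then apply Theorem~\ref{thmW2HStable} to recover $\mathrm{W\hspace*{-0.5mm}_2H}$. The natural tool is the tensorization/linearization trick used to pass from $\mathrm{W\hspace*{-0.5mm}_2I}$ to LSI in Guillin-L\'eonard-Wang-Wu: apply $\mathrm{rW\hspace*{-0.5mm}_2I}$ not to $\nu$ directly but to perturbations, and use the dual (Bobkov-G\"otze / Kantorovich) formulation $W_2(\nu,\mu)^2 = \sup\{2\int Q_t g\,\mathrm{d}\nu - 2\int g\,\mathrm{d}\mu : \dots\}$ via the Hamilton-Jacobi semigroup $Q_t$, whose solutions $Q_1 g$ are automatically semi-convex with a controlled constant — this is precisely why the restricted class is the right one. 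Feeding such a $g$ in, differentiating at $t=0$, and recognizing $\frac{d}{dt}H(\nu_t|\mu)$ one should extract $\mathrm{Ent}_\mu(e^f)\leqslant \frac{2C_2}{(1-KC_2)^2}\cdot 4 I(e^f)$ type bound, hence $\mathrm{rLSI}(C_2)$, and then Theorem~\ref{thmW2HStable} yields $\mathrm{W\hspace*{-0.5mm}_2H}(8C_2)$; combined with a possible factor-$2$ loss in the linearization one reaches $C_1=4C_2$. The main obstacle, and the step I would spend the most effort on, is the semi-convexity control of the Hamilton-Jacobi output: one needs to check that $Q_1 g$ (or the appropriate Hopf-Lax transform adapted to the $W_2$-cost on the manifold) is $K$-semi-convex with $K<C_2^{-1}$ so that it is an admissible test function in $\mathrm{rW\hspace*{-0.5mm}_2I}(C_2)$, and to track how the resulting constant interacts with the $(1-KC_2)^{-2}$ factor; on a general Riemannian manifold the semi-convexity of Hopf-Lax solutions is where curvature could in principle enter, so I would either restrict to the infimum-convolution structure that $\mathrm{W\hspace*{-0.5mm}_2H}$ already encodes (Bobkov-G\"otze) to sidestep manifold technicalities, or quote the relevant Hamilton-Jacobi regularity from the literature. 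Everything else — the normalization $\mu(e^f)=1$, the identity $\int|\nabla f|^2 e^f\mathrm{d}\mu = 4I(e^f)$, and the final constant arithmetic $2C_1=C_2$ and $C_1=4C_2$ — is routine once the bridge lemma is in place.
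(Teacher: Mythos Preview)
Your argument for $(1)\Rightarrow(2)$ is correct and is exactly what the paper does: chain $\mathrm{W_2H}(C_1)$ with the $\mathrm{rLSI}(C_1)$ bound from Theorem~\ref{thmW2HStable}, and your observation that the admissible range $K<C_2^{-1}=(2C_1)^{-1}$ is strictly smaller than $K<C_1^{-1}$, together with the monotonicity $(1-KC_1)^{-2}\leqslant(1-KC_2)^{-2}$, cleanly handles the constant matching.

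The gap is in $(2)\Rightarrow(1)$. Your plan is to first deduce $\mathrm{rLSI}$ from $\mathrm{rW_2I}$ and then invoke Theorem~\ref{thmW2HStable}. But the step $\mathrm{rW_2I}\Rightarrow\mathrm{rLSI}$ is not substantiated, and there are concrete obstructions. First, the reference you invoke (Guillin--L\'eonard--Wang--Wu) proves $\mathrm{W_2I}\Rightarrow\mathrm{W_2H}$, \emph{not} $\mathrm{W_2I}\Rightarrow\mathrm{LSI}$; there is no known linearization that extracts an entropy--information bound from a transport--information bound alone. Second, the Hopf--Lax output $Q_1g$ is semi-\emph{concave}, not semi-convex, so it does not produce an admissible density $e^f$ for $\mathrm{rW_2I}$; the semi-convex objects are the sup-convolutions $P_t h$. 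Third, even if you could reach $\mathrm{rLSI}(C_2)$, Theorem~\ref{thmW2HStable} would only give $\mathrm{W_2H}(8C_2)$, missing the claimed $C_1=4C_2$.

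The paper bypasses $\mathrm{rLSI}$ entirely and argues directly. For bounded Lipschitz $h$ and $\delta\in(0,\tfrac{1}{2C_2}]$, set $\psi(x,t)=\delta t\,P_{1-t}h(x)-\delta t\,\mu(P_1h)$ and $\Lambda(t)=\mu(e^\psi)$ for $t\in[0,\tfrac12]$. Since $P_{1-t}h$ is $(1-t)^{-1}$-semi-convex, $\psi$ is $\delta$-semi-convex, so $e^\psi\mu/\mu(e^\psi)$ is admissible in $\mathrm{rW_2I}(C_2)$. Using $P_{1-t}h=Q_tP_1h$ and the Hamilton--Jacobi equation one computes
\[
\delta t\,\frac{\mathrm{d}\Lambda}{\mathrm{d}t}=\int \delta^2 t\,Q_t\bigl(P_1h-\mu(P_1h)\bigr)e^\psi\,\mathrm{d}\mu-2\int|\nabla e^{\psi/2}|^2\,\mathrm{d}\mu,
\]
and the Kantorovich dual formula bounds the first integral by $\tfrac{\delta^2}{2}\mu(e^\psi)W_2^2\leqslant \tfrac{2\delta^2C_2^2}{(1-\delta C_2)^2}I(e^\psi)\leqslant 2I(e^\psi)$ via $\mathrm{rW_2I}(C_2)$. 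Hence $\Lambda'\leqslant0$, so $\Lambda(\tfrac12)\leqslant\Lambda(0)=1$, which is the Bobkov--G\"otze sup-convolution criterion and yields $\mathrm{W_2H}(2/\delta)=\mathrm{W_2H}(4C_2)$. The key idea you are missing is to test $\mathrm{rW_2I}$ on the \emph{flow} $e^{\psi(\cdot,t)}$ built from the sup-convolution, rather than trying to manufacture an entropy inequality.
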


The rest of this paper contains three sections, which give the proofs successively for Theorems \ref{thmW2ILya}, \ref{thmW2IStable} and \ref{thmRestrictedW2I}. An alternative proof of $\mathrm{W\hspace*{-0.5mm}_2H}$ through the Lyapunov condition (\ref{eqLya}) is also provided, see Section 4.

\bigskip

\section{Equivalence of $\mathrm{W\hspace*{-0.5mm}_2I}$ and the Lyapunov condition}
 \label{W2ILya}
 \setcounter{equation}{0}
Throughout this paper, write $d_0^2(x) := d^2(x,x_0)$. The use of Lyapunov condition (\ref{eqLya}) is based on \cite[Theorem 1.4]{BBCG}.

\begin{lemma}\label{lemLya}
Under (\ref{eqLya}), for any $h\in \mathbb{D}(\mathcal{E})$
   \beqn
      \int h^2 d_0^2 \mathrm{d}\mu \leqslant \frac{1}{c} \int |\nabla h|^2 \mathrm{d}\mu + \frac{b}{c}\int h^2 \mathrm{d}\mu. \label{eqTransLya}
   \eeqn
Moreover, the Poincar\'{e} inequality holds, i.e. there exists $C>0$ such that
   \begin{align}
     \int |h- \mu (h)|^2 \mathrm{d}\mu \leqslant C \int |\nabla h|^2 \mathrm{d}\mu. \tag{$\mathrm{PI}(C)$}
   \end{align}
\end{lemma}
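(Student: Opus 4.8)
The plan is to adapt the argument of Bakry--Barthe--Cattiaux--Guillin \cite{BBCG} to the present regularity. \emph{Step 1 (the weighted estimate (\ref{eqTransLya})).} In the distributional sense, (\ref{eqLya}) reads $-\int \nabla\phi\cdot\nabla W\,\mathrm{d}\mu \leqslant \int(-c\,d_0^2 + b)\,W\phi\,\mathrm{d}\mu$ for every nonnegative test function $\phi$. I would plug in $\phi = h^2/W$ with $h$ smooth and compactly supported, which is admissible because $W^{-1}$ is bounded on the support of $h$. Expanding $\nabla(h^2/W) = (2h/W)\nabla h - (h^2/W^2)\nabla W$ and using Cauchy--Schwarz together with Young's inequality $2(h/W)\,\nabla h\cdot\nabla W \leqslant |\nabla h|^2 + (h^2/W^2)|\nabla W|^2$, the term in $|\nabla W|^2$ cancels, leaving $-\int\nabla(h^2/W)\cdot\nabla W\,\mathrm{d}\mu \geqslant -\int|\nabla h|^2\,\mathrm{d}\mu$. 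Combined with the distributional inequality (whose right-hand side is $\int(-c\,d_0^2+b)h^2\,\mathrm{d}\mu$, since $W\cdot(h^2/W)=h^2$), this yields $c\int h^2 d_0^2\,\mathrm{d}\mu \leqslant \int|\nabla h|^2\,\mathrm{d}\mu + b\int h^2\,\mathrm{d}\mu$, i.e. (\ref{eqTransLya}) for such $h$; a truncation and monotone-convergence argument then extends it to every $h\in\mathbb{D}(\mathcal{E})$.

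\emph{Step 2 (from (\ref{eqTransLya}) to $\mathrm{PI}(C)$).} Since $E$ is a connected complete Riemannian manifold with $C^1$ positive density, every geodesic ball $B_R := B(x_0,R)$ admits a local Poincaré inequality, say $\int_{B_R}|h - \langle h\rangle_{B_R}|^2\,\mathrm{d}\mu \leqslant \kappa_R\int_{B_R}|\nabla h|^2\,\mathrm{d}\mu$ with $\kappa_R<\infty$, where $\langle h\rangle_{B_R}$ is the $\mu$-mean of $h$ over $B_R$. I would set $g := h - \langle h\rangle_{B_R}$ and split $\int g^2\,\mathrm{d}\mu$ over $B_R$ and $B_R^c$: the $B_R$-part is $\leqslant\kappa_R\int|\nabla h|^2\,\mathrm{d}\mu$, while on $B_R^c$ one has $d_0^2\geqslant R^2$, so (\ref{eqTransLya}) applied to $g$ gives $\int_{B_R^c}g^2\,\mathrm{d}\mu \leqslant R^{-2}\int g^2 d_0^2\,\mathrm{d}\mu \leqslant (cR^2)^{-1}\int|\nabla h|^2\,\mathrm{d}\mu + b(cR^2)^{-1}\int g^2\,\mathrm{d}\mu$. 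Fixing $R$ with $cR^2\geqslant 2b$ lets me absorb the last term after re-using the same splitting, leaving $\int g^2\,\mathrm{d}\mu\leqslant C\int|\nabla h|^2\,\mathrm{d}\mu$ for an explicit $C$ depending on $\kappa_R,c,b$; since $\mathrm{Var}_\mu(h)\leqslant\int g^2\,\mathrm{d}\mu$, this is $\mathrm{PI}(C)$.

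The only real obstacle is the rigor of Step 1: $W$ satisfies (\ref{eqLya}) only distributionally and $W^{-1}$ is merely locally bounded, so the test function $\phi = h^2/W$, the meaning of $\mathrm{L}W/W$, and the manipulations above must be justified. This is precisely the point settled in \cite[Theorem 1.4]{BBCG}: one runs the computation with $h$ smooth and compactly supported, where $W^{-1}$ is bounded on the support so every pairing and integration by parts make sense, and then passes to the limit by a density argument, the left-hand side handled by monotone convergence and the right-hand side by lower semicontinuity of the Dirichlet form. It is exactly the local boundedness of $W^{-1}$ --- rather than a uniform lower bound $W\geqslant 1$ --- that makes the compactly supported step go through, which is the adjustment announced after (\ref{eqLya}).
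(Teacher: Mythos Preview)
Your proof is correct and follows essentially the same approach as the paper: Step~1 is the identical \cite{BBCG} computation (the paper writes the key inequality as $|\nabla h|^2 - |\nabla h - (h/W)\nabla W|^2 \leqslant |\nabla h|^2$, which is your Young inequality in ``completed square'' form), and Step~2 is the same reduction to a local Poincar\'e on a ball via the weighted estimate. The only cosmetic difference is that the paper chooses the radius $r^2 = b/c + 1$ so that on $B^c$ one has $1 - d_0^2 \leqslant -b/c$, making the $\frac{b}{c}\int_{B^c}(h-\bar h_B)^2\,\mathrm{d}\mu$ term cancel exactly and avoiding your absorption step; your choice $cR^2 \geqslant 2b$ with absorption is an equally valid variant.
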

\begin{proof}
The technique in \cite[Page 64]{BBCG} yields
  \beq
    \int h^2 d_0^2 \mathrm{d}\mu
    &=& \frac{1}{c} \int h^2(cd_0^2-b) \mathrm{d}\mu + \frac{b}{c}\int h^2 \mathrm{d}\mu \\
    &\leqslant& \frac{1}{c} \int \frac{-\mathrm{L}W}{W} h^2 \mathrm{d}\mu + \frac{b}{c}\int h^2 \mathrm{d}\mu \\
    &=& \frac{1}{c} \int \nabla W \cdot \nabla \frac{h^2}{W}  \mathrm{d}\mu + \frac{b}{c}\int h^2 \mathrm{d}\mu \\
    &=& \frac{1}{c} \int |\nabla h|^2 - \left| \nabla h - \frac{h}{W}\nabla W \right|^2  \mathrm{d}\mu + \frac{b}{c}\int h^2 \mathrm{d}\mu \\
    &\leqslant& \frac{1}{c} \int |\nabla h|^2 \mathrm{d}\mu + \frac{b}{c}\int h^2 \mathrm{d}\mu.
  \eeq
Note that here is no need to assume the integrability of $d_0^2$ or $\frac{-\mathrm{L}W}{W}$ for $\mu$, since we can take an approximation sequence in $C_c^\infty(E)$ for given $h$.

Next, according to \cite{BBCG}, let $B$ be a ball of radius $r$ centered at $x_0$ with $r^2 = \frac{b}{c}+1$. Denote $\bar{h}_{B} = \mu(h\mathbf{1}_B)/\mu(B)$, we have
  \beq
     \int |h-\mu(h)|^2 \mathrm{d}\mu &=& \inf\limits_{\lambda \in \mathbb{R}} \int |h-\lambda|^2 \mathrm{d}\mu 
     \ \leqslant\   \int |h-\bar{h}_{B} |^2 \mathrm{d}\mu \\
     &=& \int |h-\bar{h}_{B} |^2 d_0^2 \mathrm{d}\mu  +  \int |h-\bar{h}_{B} |^2 (1-d_0^2) \mathrm{d}\mu\\
     &\leqslant& \frac{1}{c} \int |\nabla h|^2 \mathrm{d}\mu + \left(\frac{b}{c}+1\right)\int_{B} |h-\bar{h}_{B} |^2 \mathrm{d}\mu, 
  \eeq
 which implies the (global) Poincar\'{e} inequality by combining the local one on $B$.
\end{proof}
\begin{remark}
Under (\ref{eqLya0}), it was proved that $\mu$ admits the Gaussian integrability $\mu e^{\delta d_0^2} < \infty$ for some $\delta>0$. In \cite{Liu0}, an elementary proof of this fact was given with a sharp estimate for $\delta$. Actually, these all hold under (\ref{eqLya}) too.
\end{remark}

Recall Barthe-Cattiaux-Roberto \cite[Lemma 14]{BCR}, the following inequality holds.

\begin{lemma} \label{lemBarthe}
For any $s\in [0, Nt]$ with $N\geqslant0$ and $t>0$,
   \[ s^2\log\frac{s^2}{t^2} - (s^2 - t^2) \leqslant (1+N)^2(s-t)^2. \]
\end{lemma}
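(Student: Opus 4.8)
The plan is to strip the statement down to a one‑variable inequality by scaling, and then to split the resulting estimate into two elementary pieces.

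First I would use homogeneity. Both sides are homogeneous of degree $2$ in $(s,t)$, so dividing through by $t^2>0$ and setting $a:=s^2/t^2\geqslant 0$ (so that $\sqrt a=s/t\in[0,N]$ by the hypothesis $s\in[0,Nt]$), the claim becomes
\[ a\log a-(a-1)\leqslant (1+N)^2(\sqrt a-1)^2, \]
with the usual convention $0\log 0=0$; the degenerate case $a=0$ reads $1\leqslant(1+N)^2$, which holds since $N\geqslant 0$.

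Next I would establish the auxiliary bound $a\log a-a+1\leqslant (a-1)^2$, valid for every $a\geqslant 0$. This is immediate from the classical inequality $\log a\leqslant a-1$, because
\[ (a-1)^2-\bigl(a\log a-a+1\bigr)=a^2-a-a\log a=a\,(a-1-\log a)\geqslant 0. \]
Finally I would factor $(a-1)^2=(\sqrt a-1)^2(\sqrt a+1)^2$ and use $\sqrt a\leqslant N$, which gives $(\sqrt a+1)^2\leqslant(1+N)^2$, hence $(a-1)^2\leqslant(1+N)^2(\sqrt a-1)^2$. Chaining this with the previous display and multiplying back through by $t^2$ yields the lemma.

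There is essentially no obstacle here; the only points requiring a little care are the degenerate case $a=0$ together with the convention $0\log 0=0$, and noticing that it is precisely the tight estimate $\log a\leqslant a-1$ (applied to $a=s^2/t^2$) together with the factorization $\sqrt a+1\leqslant N+1$ that makes the constant come out as exactly $(1+N)^2$.
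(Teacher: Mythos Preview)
Your proof is correct and is essentially the same argument as the paper's: both rest on the elementary inequality $\log a\leqslant a-1$ applied with $a=s^2/t^2$, followed by the factorization $(a-1)^2=(\sqrt a-1)^2(\sqrt a+1)^2$ and the bound $\sqrt a+1\leqslant N+1$. The only cosmetic difference is that you normalize by $t^2$ at the outset, whereas the paper states the mean-value form $\log a-\log b\leqslant (a-b)/b$ and leaves the remaining algebra implicit.
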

\begin{proof}
Use the Lagrange's mean-value theorem to get $\log a - \log b \leqslant \frac{a-b}{b}$, and then substitute  $\frac{s^2}{t^2}$ for $a$ and $1$ for $b$.
\end{proof}

Recall the definition of infimum-convolution $Q_th$ is for any $t\geqslant0$
   \[ Q_th(x) := \inf\limits_{y\in E} \left\{ h(y) + \frac12d^2(x,y) \right\} . \]
\begin{lemma} \label{lemConvolution}
Let $g=\frac12 (d_0^2 \wedge D)$ for any constant $D\in [0,\infty]$. Then $Q_1 g \geqslant \frac12 g$.
\end{lemma}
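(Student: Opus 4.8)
The plan is to reduce everything to the pointwise inequality
\[ g(y) + \tfrac12 d^2(x,y) \geq \tfrac12 g(x) \qquad \text{for all } x,y\in E, \]
since taking the infimum over $y$ in this inequality gives exactly $Q_1 g(x)\geq \tfrac12 g(x)$, which is the claim.

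The key observation is that $\sqrt{2g} = \sqrt{d_0^2\wedge D} = \min\!\big(d_0,\sqrt D\big)$ (with the convention that this is just $d_0$ when $D=\infty$). Being the minimum of two $1$-Lipschitz functions on $(E,d)$ — namely $x\mapsto d(x,x_0)$, which is $1$-Lipschitz by the triangle inequality, and the constant $\sqrt D$ — the function $\sqrt{2g}$ is itself $1$-Lipschitz. Hence for any $x,y\in E$ we have $\sqrt{2g(x)}\leq \sqrt{2g(y)}+d(x,y)$.

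Squaring this and applying the elementary bound $(p+q)^2\leq 2p^2+2q^2$ yields $2g(x)\leq 4g(y)+2d^2(x,y)$, i.e. $g(y)+\tfrac12 d^2(x,y)\geq \tfrac12 g(x)$, and taking the infimum over $y$ completes the proof.

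There is no genuine obstacle here; the only point needing a word of care is to cover the degenerate endpoints $D=0$ (where $g\equiv 0$ and the statement is trivial) and $D=\infty$ (where $\sqrt{2g}=d_0$, $1$-Lipschitz by the triangle inequality), so that the argument is uniform in $D\in[0,\infty]$. One could instead avoid the Lipschitz remark and split into cases according to whether $d_0^2(x)$ and $d_0^2(y)$ exceed $D$, invoking the triangle inequality and $(a+b)^2\leq 2a^2+2b^2$ in each case, but the Lipschitz route is shorter and cleaner.
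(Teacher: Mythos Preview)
Your proof is correct and is essentially a cleaner repackaging of the paper's argument. The paper proves the same pointwise inequality $g(y)+\tfrac12 d^2(x,y)\geq \tfrac12 g(x)$ by splitting into the two cases $d_0^2(y)<D$ and $d_0^2(y)\geq D$ (exactly the alternative you mention at the end), using the triangle inequality and $(a+b)^2\leq 2a^2+2b^2$ in the first case and the trivial bound $D\geq d_0^2(x)\wedge D$ in the second. Your observation that $\sqrt{2g}=\min(d_0,\sqrt D)$ is $1$-Lipschitz absorbs both cases into a single line, which is a genuine (if minor) streamlining; it also makes transparent why the constant $\tfrac12$ appears and would generalize immediately to any $g$ with $\sqrt{2g}$ Lipschitz.
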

\begin{proof}
The definition gives
  \[ Q_1 g(x) = \inf\limits_{y} \left\{ g(y) + \frac12 d^2(x,y) \right\} = \frac12 \inf\limits_{y} \left\{ d^2(y,x_0) \wedge D + d^2(x,y) \right\}. \]
Suppose the infimum is achieved at some $z$, it follows either $d^2(z,x_0) < D$ so that
   \[ d^2(z,x_0) \wedge D + d^2(x,z) = d^2(z,x_0) + d^2(x,z) \geqslant \frac12 d^2(x,x_0) \geqslant \frac12 \left(d^2(x,x_0)  \wedge D\right), \] 
or $d^2(z,x_0) \geqslant D$ so that 
   \[ d^2(z,x_0) \wedge D + d^2(x,z) = D  + d^2(x,z) \geqslant D \geqslant  d^2(x,x_0)  \wedge D. \]
Hence, it is always true that
      \[ Q_1 g(x) \geqslant \frac14 \left( d^2(x,x_0)  \wedge D\right) \ =\ \frac12 g(x). \]
The proof is completed.
\end{proof}

Now we prove Theorem \ref{thmW2ILya}.

\begin{proof} The strategy contains two parts.

{\bf Part 1}. Assume the Lyapunov condition (\ref{eqLya}) holds. Then $\mathrm{W\hspace*{-0.5mm}_2H}(C_T)$ comes true with constant $C_T>0$ by \cite[Theorem 1.9]{CGW}, which also verifies the Bobkov-G\"{o}tze's  infimum-convolution criterion in \cite{BG} that for any bounded $h$ with $\mu (h) =0$,
    \beqn
      \int e^{Q_{C_T}h} \mathrm{d}\mu \leqslant 1. \label{eqBobGot}
    \eeqn

From the representation (see for example Bakry-Gentil-Ledoux \cite[Section 9.2]{BGL})
   \[ \frac12 W_2(\nu,\mu)^2 \ = \sup_{h\ \textrm{bounded}} \int Q_1h \mathrm{d}\nu - \int h\mathrm{d}\mu
            \ = \sup_{\scriptsize\begin{array}{c} h\ \textrm{bounded}\\ \mu (h) =0 \end{array}} \int Q_1h \mathrm{d}\nu, \]
it follows
   \[ \frac1{2C_T} W_2(\nu,\mu)^2 = \sup_{\scriptsize\begin{array}{c} h\ \textrm{bounded}\\ \mu (h) =0 \end{array}} \int Q_{C_T} h \mathrm{d}\nu. \]
Let $\mathrm{d}\nu = f^2\mathrm{d}\mu$ with $f>0$. We introduce a subset with some parameter $N>1$
   \[ A = \{x: 0< f\leqslant N\cdot \mu (f) \},  \]
to make the following decomposition
   \[ \int f^2Q_{C_T} h\mathrm{d}\mu = \int_{A} f^2(Q_{C_T} h - \log f^2) \mathrm{d}\mu
         + \int_{A} f^2 \log f^2 \mathrm{d}\mu + \int_{A^c} f^2 Q_{C_T} h \mathrm{d}\mu. \]

Now we estimate each term in the above sum. First of all, using the inequality $\log a - \log b \leqslant \frac{a-b}{b}$ and (\ref{eqBobGot}) yields
   \beqn
      \int_{A} f^2(Q_{C_T} h - \log f^2) \mathrm{d}\mu
      &\leqslant& \int_{A} e^{Q_{C_T} h} - f^2 \mathrm{d}\mu \nonumber\\
      &=& \int e^{Q_{C_T} h} - f^2 \mathrm{d}\mu + \int_{A^c} f^2 - e^{Q_{C_T} h}\mathrm{d}\mu \nonumber\\
      &\leqslant& \int_{A^c} f^2\mathrm{d}\mu \ \leqslant\ \frac{N^2}{(N-1)^2} \int_{A^c} \left(f-\mu (f)\right)^2 \mathrm{d}\mu.
      \label{eqDecom1}
   \eeqn
The last inequality is due to $f> N\cdot \mu (f)$ on $A^c$.

Next, since $0<\left(\mu (f)\right)^2 \leqslant \mu (f^2) =1$, we have by Lemma \ref{lemBarthe} for $t=\mu (f)$ and $s=f(x)$ with $x\in A$ that
   \beqn
      \int_{A} f^2\log f^2 \mathrm{d}\mu
      &=& \int_{A} f^2\log \frac{f^2}{\left(\mu (f)\right)^2} \mathrm{d}\mu + \int_{A} f^2 \mathrm{d}\mu \cdot \log\left(\mu (f)\right)^2 \nonumber\\
      &\leqslant& \int_{A} f^2 - \left(\mu (f)\right)^2 + (1+N)^2 \left(f- \mu (f)\right)^2 \mathrm{d}\mu \nonumber \\
      &\leqslant& \int f^2 - \left(\mu (f)\right)^2 \mathrm{d}\mu + (1+N)^2 \int_{A} \left(f- \mu (f)\right)^2 \mathrm{d}\mu \nonumber\\
      &\leqslant& \left[1+(1+N)^2\right] \int \left(f- \mu (f)\right)^2 \mathrm{d}\mu. \label{eqDecom2}
   \eeqn

Thirdly, the definition of infimum-convolution gives
  \[ Q_{C_T} h(x) \leqslant \int h(y) + \frac{1}{2C_T}d^2(x,y)\mathrm{d}\mu(y) \leqslant  \frac{1}{C_T} \left( d_0^2(x)+\mu (d_0^2) \right), \]
and then due to Lemma \ref{lemLya}
   \beqn
      \int_{A^c} f^2Q_{C_T} h \; \mathrm{d}\mu
      &\leqslant& \frac{N^2}{C_T(N-1)^2} \int_{A^c} \left(f-\mu (f) \right)^2 \left( d_0^2+\mu (d_0^2) \right) \mathrm{d}\mu \nonumber\\
      &\leqslant& \frac{N^2}{C_T(N-1)^2} \left(\frac{1}{c} \int |\nabla f|^2\mathrm{d}\mu +  \frac{b+\mu (d_0^2)}{c} \int \left(f-\mu (f)\right)^2 \mathrm{d}\mu \right). \label{eqDecom3}
   \eeqn

Combining (\ref{eqDecom1}-\ref{eqDecom3}) and using the Poincar\'{e} inequality $\mathrm{PI}(C_P)$ yield
   \[ W_2(\nu,\mu) \leqslant \sqrt{2C I(\nu|\mu)}, \]
where $C$ is less than $C_TC_P[\frac{N^2}{(N-1)^2}+1+(1+N)^2] + \frac{N^2}{c(N-1)^2}\left[1+\left(b+\mu (d_0^2)\right)C_P\right]$.

\bigskip
{\bf Part 2}. Assume $\mathrm{W\hspace*{-0.5mm}_2I}(C)$ holds, which implies $\mathrm{W\hspace*{-0.5mm}_2H}(C)$ by \cite[Theorem 2.4]{GLWY} and then $\mathrm{W\hspace*{-0.5mm}_1H}(C)$ automatically. According to Djellout-Guillin-Wu \cite[Theorem 2.3]{DGW}, $\mu$ satisfies the Gaussian integrability $\mu (e^{\delta d_0^2}) < \infty$ for some $\delta>0$.

A Lyapunov function can be constructed by solving certain elliptic equation. Set $\phi = -cd_0^2 + b$ with two parameters $c>0, b\geqslant 0$, which will be determined below. Introduce a partial differential equation of second order for $w\in L^2(\mu)$
  \beqn
     \mathrm{H}u := -\mathrm{L}u + \phi u = w. \label{eqSchr}
  \eeqn

First of all, (\ref{eqSchr}) gives $\mu(u\cdot \mathrm{H}u) \leqslant \mathcal{E}[u] + b \mu (u^2)$ directly. On the other hand, for any $u\in \mathbb{D}(\mathcal{E})$ with $\mu (u^2)>0$, let $\mathrm{d}\nu = u^2/\mu (u^2) \mathrm{d}\mu$ and $g_D = \frac12 (d_0^2\wedge D)$, using Lemma \ref{lemConvolution} and $\mathrm{W\hspace*{-0.5mm}_2I}$ yields
   \beq
      \mu\left(u^2 d_0^2\right) &=& \lim\limits_{D\to \infty} \mu\left(u^2 (d_0^2 \wedge D)\right)
      \ \leqslant\ \lim\limits_{D\to \infty}4\mu\left(u^2 Q_1g_D\right)\\
      &\leqslant& \lim\limits_{D\to \infty}4\mu (u^2) W_2(\nu,\mu)^2 + 4\mu (u^2) \mu (g_D)
      \ \leqslant\ 8C\mathcal{E}[u] + 2\mu (u^2) \mu (d_0^2),
   \eeq
which implies by taking $c=1/(16C)$ and $b=4c\cdot\mu (d_0^2)$ that
 \beqn 
      \int c u^2 d_0^2 \mathrm{d}\mu   \leqslant  \frac{1}{2} \left(\mathcal{E}[u] + b \mu (u^2)\right), \label{eqcd2u2}
   \eeqn
and then
  \beq
     \mu(u\cdot \mathrm{H}u)
     &=& \mathcal{E}[u] + b \mu (u^2) - c \mu\left(u^2d_0^2\right)\\
     &\geqslant& \mathcal{E}[u] + b \mu (u^2) - c \left( 8C\mathcal{E}[u] + 2 \mu (u^2) \mu (d_0^2) \right)
     \ \geqslant\ \frac{1}{2} \left(\mathcal{E}[u] + b \mu (u^2) \right).
  \eeq
Combining these estimates with the H\"{o}lder inequality yields for any $u\in \mathbb{D}(\mathcal{E})$ and $v\in \mathbb{D}(\mathrm{L})$
   \beq
       (u, \mathrm{H} v) & = & \int \nabla u \cdot \nabla v -cd_0^2uv + buv \;\mathrm{d}\mu \\
       & \leqslant & \left(\mathcal{E}[u]\right)^{\frac{1}{2}} \left(\mathcal{E}[v]\right)^{\frac{1}{2}}  
                                + c \left(\mu(d_0^2u^2)\right)^{\frac12}\left(\mu(d_0^2v^2)\right)^{\frac12} + b \left(\mu(u^2)\right)^{\frac12}\left(\mu(v^2)\right)^{\frac12} \\
       & \leqslant &  \frac52 \left(\mathcal{E}[u] + b \mu (u^2)\right)^{\frac12} \left(\mathcal{E}[v] + b \mu (v^2)\right)^{\frac12}.
   \eeq

Hence, $(u, \mathrm{H}v)$ determines a coercive Dirichlet form, and $\mathrm{H}$ is a positive definite self-adjoint Schr\"{o}dinger operator with its spectrum contained in $(0,\infty)$. It means $\mathrm{H}^{-1}$ exists on $L^2(\mu)$ according to the Lax-Milgram Theorem, i.e. $u=\mathrm{H}^{-1}w\in H^1(\mu)$ (the $L^2$-integrable Sobolev space of weak derivatives of first order) is a weak solution of Equation (\ref{eqSchr}), see Evans \cite{Evans} or Gilbarg-Trudinger \cite{GiTr}.

Whenever $w\geqslant 0$, the weak maximum principle yields $u=\mathrm{H}^{-1}w \geqslant 0$ $\mu$-a.e. too. As a routine, we set $u_{-} = -\min\{u, 0\}$, which has weak derivatives and satisfies
   \beq
     0 \leqslant \mu(u_{-}w) &=& \mu(u_{-}\cdot \mathrm{H}u) \ =\  \int u_{-}\cdot \mathrm{L}u \mathrm{d}\mu - \int \phi u_{-}u  \mathrm{d}\mu\\
      &=&   - \int \nabla u_{-}\cdot \nabla u \mathrm{d}\mu - \mu(\phi u_{-}^2) \\
      &=& - \mathcal{E}[u_{-}] - b\mu(u_{-}^2) + c\mu(d_0^2 u_{-}^2) \ \leqslant\  -\frac12 \left( \mathcal{E}[u_{-}] + b\mu(u_{-}^2) \right) \leqslant 0,
   \eeq
where the middle inequality is derived from (\ref{eqcd2u2}). It follows $\mu(u_{-}^2) =0$, which implies $u_{-} = 0$ $\mu$-a.e. and then $u\geqslant 0$ $\mu$-a.e.

Now fix $w\equiv 1$ and $u = \mathrm{H}^{-1}1$. For any ball $B\subset E$, $u\mathbf{1}_B$ gives a weak solution to Equation (\ref{eqSchr}) restricted in $B$, since there holds for any $h\in C_{\mathrm{c}}^1(B)$ (i.e. the set of first-order derivative functions with compact support in $B$)
    \[ \int_B \mathrm{H}u \cdot h \mathrm{d}x = \int \mathrm{H}u \cdot (he^V) \mathrm{d}\mu = \int w \cdot (he^V) \mathrm{d}\mu = \int_B w \cdot h \mathrm{d}x. \]
When $E$ is the Euclidean space, according to \cite[Theorem 8.22]{GiTr} and the notation therein, $u$ is locally H\"{o}lder continuous in $B$ if we set $f^i=0$, $g=-w$ and $L = \mathrm{L} - \phi$ such that $Lu = g$ as \cite{GiTr} did. Note that the continuity is a local property. Since any local region in Riemannian manifold is (smoothly) diffeomorphic to a region in $\mathbb{R}^n$, which preserves the uniform ellipticity for the (weighted) Laplacian, it follows that $u$ is continuous on $E$ in the framework of Riemannian manifolds.

Moreover, we prove that $u>0$ everywhere. By contradiction, assume $u(y)=0$ at some $y$. Choose $r>0$ and $v\in C^2(E)$ to satisfy
  \[ v\geqslant0, \ v(y)>0, \ v_{|B_r(y)^c} = 0, \ \mathrm{H}v \leqslant \frac12. \]
It follows on $E$
  \[ \mathrm{H}(u-v) = w - \mathrm{H}v \geqslant \frac12, \]
which implies $u-v\geqslant0$ $\mu$-a.e. by the weak maximum principle. Then $u(y)\geqslant v(y)$ by the continuity, which is absurd.

As consequence, we obtain $u\in H^1(\mu)\cap C(E)$ is strictly positive everywhere, and $u^{-1}$ is locally bounded. So $u$ is a Lyapunov function for $\mathrm{L}u \leqslant \phi u$.
\end{proof}

\bigskip

\section{Stability of $\mathrm{W\hspace*{-0.5mm}_2I}$}
 \label{W2IStable}
 \setcounter{equation}{0}

Now we prove Theorem \ref{thmW2IStable}, with similar arguments for Theorem \ref{thmW2ILya} and a few more efforts.

\begin{proof}
Assume $\mu$ verifies $\mathrm{W\hspace*{-0.5mm}_2I}(C)$. Thanks to the implication from $\mathrm{W\hspace*{-0.5mm}_2I}$ to $\mathrm{W\hspace*{-0.5mm}_2H}$ by \cite{GLWY} and the stability of $\mathrm{W\hspace*{-0.5mm}_2H}$ by \cite{GRS}, $\tilde{\mu}$ also verifies the Bobkov-G\"{o}tze's criterion, namely there exists $\tilde{C_T}>0$ such that for any bounded $h$ with $\tilde{\mu} (h) =0$,
    \[ \int e^{Q_{\tilde{C_T}}h} \mathrm{d}\tilde{\mu} \leqslant 1. \]
Recall the first part of proof for Theorem \ref{thmW2ILya}, we introduce $\mathrm{d}\tilde{\nu} = f^2\mathrm{d}\tilde{\mu}$ with $f> 0$ and $\tilde{A}=\{x: 0< f\leqslant N\cdot \tilde{\mu} (f) \}$, and then deal with $\int f^2Q_{\tilde{C_T}} h\mathrm{d}\tilde{\mu}$ by combining three estimates similar as (\ref{eqDecom1}-\ref{eqDecom3}).

But there is a gap since (\ref{eqDecom3}) relies on the Lyapunov condition, which holds for $\mu$ by Theorem \ref{thmW2ILya}, not proved for $\tilde{\mu}$ yet. Nevertheless, it can be quickly fixed due to $\frac{\mathrm{d}\tilde{\mu}}{\mathrm{d}\mu}$ is two-sided bounded so that
   \beqn
      \int_{\tilde{A}^c} f^2Q_{\tilde{C_T}} h \; \mathrm{d}\tilde{\mu}
      &\leqslant& \frac{N^2}{\tilde{C_T}(N-1)^2} \int_{\tilde{A}^c} \left(f-\tilde{\mu} (f)\right)^2 \left(d_0^2 + \mu (d_0^2) \right) \mathrm{d}\tilde{\mu} \nonumber\\
      &\leqslant& \frac{MN^2}{\tilde{C_T}(N-1)^2} \int_{\tilde{A}^c} \left(f-\tilde{\mu} (f)\right)^2 \left(d_0^2 + \mu (d_0^2) \right) \mathrm{d}\mu \nonumber\\
      &\leqslant& \frac{MN^2}{\tilde{C_T}(N-1)^2} \left(\frac{1}{c} \int |\nabla f|^2\mathrm{d}\mu +  \frac{b+ \mu (d_0^2)}{c} \int \left(f-\tilde{\mu} (f)\right)^2 \mathrm{d}\mu \right) \nonumber\\
      &\leqslant& \frac{M^2N^2}{\tilde{C_T}(N-1)^2} \left(\frac{1}{c} \int |\nabla f|^2\mathrm{d}\tilde{\mu} +  \frac{b+ \mu (d_0^2)}{c} \int \left(f-\tilde{\mu} (f)\right)^2 \mathrm{d}\tilde{\mu} \right). \label{eqDecom3tilde}
   \eeqn
Hence, we can substitute (\ref{eqDecom3tilde}) for (\ref{eqDecom3}) and then complete the proof by using the stability of Poincar\'{e} inequality.
\end{proof}

\bigskip

\section{A characterization of $\mathrm{W\hspace*{-0.5mm}_2H}$ via a restricted $\mathrm{W\hspace*{-0.5mm}_2I}$}
 \label{W2HLya}
 \setcounter{equation}{0}
This section has two parts. First, we give a direct proof $\mathrm{W\hspace*{-0.5mm}_2H}$ under the Lyapunov condition, which was originally proposed by \cite[Theorem 1.2]{CGW}. Next, we introduce a restricted $\mathrm{W\hspace*{-0.5mm}_2I}$ interpolating between $\mathrm{W\hspace*{-0.5mm}_2H}$ and the restricted LSI in the sense of \cite[Definition 1.3]{GRS}. We would like to point out here, the theory of Hamilton-Jacobi equations played a fundamental role in studying quadratic transport inequalities. The general work by Gozlan-Roberto-Samson \cite{GRS-HJ} on metric spaces, which extended some early results by Lott-Villani \cite{LV}, is sufficiently adapted in the framework of Riemannian manifolds. 

\subsection{An alternative proof of $\mathrm{W\hspace*{-0.5mm}_2H}$ under the Lyapunov condition (\ref{eqLya})}

Now we prove Theorem \ref{thmW2H}.
\begin{proof}
Let $\delta>0$ be some parameter. Given any bounded $h$ with $\mu (h) =0$, define for all $x\in E$ and $t>0$
  \[\phi(x,t) = \delta t Q_th(x), \ \ \ \Lambda=\mu (e^{\phi}), \ \ \ \lambda = \mu (e^{\phi/2}).  \]
According to \cite{GRS-HJ},  the Hopf-Lax formula $Q_th$ solves the Hamilton-Jacobi equation
  \[ \left\{\begin{array}{l}\frac{\mathrm{d}}{\mathrm{d}t} u + \frac12|\nabla u|^2=0,\\u_0=h,\end{array}\right.\]
and we have
  \beqn
     \frac{\mathrm{d}\Lambda}{\mathrm{d}t} =  \int e^{\phi} \left(\delta Q_th + \delta t \frac{\mathrm{d}Q_th}{\mathrm{d}t} \right) \mathrm{d}\mu
     = \frac{1}{\delta t} \int \delta e^{\phi}\phi - 2|\nabla e^{\phi/2}|^2 \mathrm{d}\mu. \label{eqLambdaDerivative}
  \eeqn

We need to estimate the following
   \[  \int e^{\phi}\phi  \mathrm{d}\mu = \int e^{\phi} \log \frac{e^\phi}{\lambda^2}  \mathrm{d}\mu + \Lambda\log\lambda^2. \]
Set $A = \{x: e^{\phi/2} \leqslant N\lambda\}$ for any given $N>1$, which implies $\lambda^2<e^{\phi}$ on $A^c$. Using Lemma \ref{lemBarthe} on $A$ yields
   \beqn
      \int_A e^{\phi} \log \frac{e^\phi}{\lambda^2}  \mathrm{d}\mu
      &\leqslant& \int_A e^{\phi} - \lambda^2 + (1+N)^2 (e^{\phi/2} -\lambda)^2 \mathrm{d}\mu \nonumber\\
      &=& \int e^{\phi} - \lambda^2 \mathrm{d}\mu + \int_{A^c} \lambda^2 - e^{\phi} \mathrm{d}\mu + (1+N)^2 \int_A (e^{\phi/2} -\lambda)^2 \mathrm{d}\mu \nonumber\\
      &=& \int (e^{\phi/2} -\lambda)^2  \mathrm{d}\mu + \int_{A^c} \lambda^2 - e^{\phi} \mathrm{d}\mu + (1+N)^2 \int_A (e^{\phi/2} -\lambda)^2 \mathrm{d}\mu \nonumber\\
      &\leqslant& \left[1+(1+N)^2\right] \mu(e^{\phi/2} - \lambda)^2. \label{eqW2HEst1}
   \eeqn
Next, let $d_\mu^2(x) = \mu \left(d^2(x,\cdot)\right)$, we have due to the facts $e^\phi \leqslant \frac{N^2}{(N-1)^2}(e^{\phi/2} - \lambda)^2$ on $A^c$ and $\phi \leqslant \frac{\delta}{2}d_\mu^2$ on $E$
   \beqn
      \int_{A^c} e^{\phi} \log \frac{e^\phi}{\lambda^2}  \mathrm{d}\mu
      &=& \int_{A^c} e^{\phi}\phi \mathrm{d}\mu - \int_{A^c}e^{\phi} \mathrm{d}\mu \log\lambda^2  \nonumber\\
      &\leqslant& \frac{\delta N^2}{2(N-1)^2} \int_{A^c} (e^{\phi/2} - \lambda)^2 d_\mu^2 \mathrm{d}\mu - \int_{A^c}e^{\phi} \mathrm{d}\mu \log\lambda^2. \label{eqW2HEst2}
   \eeqn
It follows from Lemma \ref{lemLya} and $d_\mu^2\leqslant 2d_0^2 + 2\mu (d_0^2)$ that
   \beqn
    \ \ \ \ \int_{A^c} (e^{\phi/2} - \lambda)^2 d_\mu^2 \mathrm{d}\mu \leqslant  \frac{2}{c} \int |\nabla e^{\phi/2}|^2 \mathrm{d}\mu + \frac{2b+2\mu (d_0^2)}{c}\int (e^{\phi/2} - \lambda)^2 \mathrm{d}\mu. \label{eqW2HEst22}
   \eeqn
Combining (\ref{eqW2HEst1}-\ref{eqW2HEst22}) with the Poincar\'{e} inequality $\mathrm{PI}(C_P)$ gives
   \beqn
      \int e^{\phi}\phi  \mathrm{d}\mu &\leqslant& C_0 \int |\nabla e^{\phi/2}|^2 \mathrm{d}\mu
        + \int_Ae^{\phi} \mathrm{d}\mu \log\lambda^2,\label{eqW2HEst3}
   \eeqn
where $C_0 = C_P[1+(1+N)^2] + \frac{\delta N^2 \left[1+\left(b+\mu (d_0^2)\right)C_P\right]}{c(N-1)^2}$.

Combining (\ref{eqLambdaDerivative}) and (\ref{eqW2HEst3}), we take $\delta>0$ with $\delta C_0= 2$ so that
  \[ \frac{\mathrm{d}\Lambda}{\mathrm{d}t}
    \leqslant \frac{1}{t} \int_Ae^{\phi} \mathrm{d}\mu \log\lambda^2 \leqslant \frac{1}{t} (\Lambda \log\Lambda) \vee 0. \]
The rest work is similar to the last step in the proof of \cite[Lemma 3.2]{CGW}. Note that $\Lambda(0) = 1$. If $\Lambda(1) > 1$, let $t_0\in [0, 1)$ be the maximal time such that $\Lambda(t_0) = 1$. Then for all $t\in (t_0, 1)$ holds $\frac{\mathrm{d}\Lambda}{\mathrm{d}t} \leqslant \frac{\Lambda \log\Lambda}{t} $, which means $\frac{\mathrm{d}}{\mathrm{d}t} (\frac{\log\Lambda}{t}) \leqslant 0$ and thus
$\log\Lambda(1) \leqslant \lim_{t\downarrow t_0}\frac{\log\Lambda(t)}{t} = 0$. It contradicts the assumption $\Lambda(1) > 1$. Hence, the Bobkov-G\"{o}tze's criterion is verified and $\mathrm{W\hspace*{-0.5mm}_2H}(\frac{1}{\delta})$ follows.
\end{proof}

\bigskip
\subsection{A restricted $\mathrm{W\hspace*{-0.5mm}_2I}$}
According to \cite{GRS, GRS-HJ}, define the supremum-convolution
  \[ P_t h(x) = \sup\limits_{y\in E} \left\{h(y) - \frac{1}{2t}d^2(x,y)\right\}, \]
which solves the Hamilton-Jacobi equation
  \[ \left\{\begin{array}{l}\frac{\mathrm{d}}{\mathrm{d}t} u - \frac12|\nabla u|^2=0,\\u_0=h,\end{array}\right.\]
When $h$ is Lipschitz continuous, there is a unique weak solution to the equation (see also Evans \cite[Section 3.3, Theorem 7]{Evans}), which means $P_{1-t}f = Q_tP_1f$ for $t\in [0,1]$. Moreover, $P_t h$ is $t^{-1}$-semi-convex by \cite[Lemma 5.3]{GRS}.

Now we prove Theorem \ref{thmRestrictedW2I}.

\begin{proof}
If $\mathrm{W\hspace*{-0.5mm}_2H}(C_T)$ holds, we have the $\mathrm{rLSI}(C_T)$ for all $K$-semi-convex functions with $0\leqslant K < C_T^{-1}$ by \cite[Theorem 1.5]{GRS}. Then for $K$-semi-convex $f$ with $\mu (e^f) =1$
  \[ W_2(e^f\mu, \mu)^2 \leqslant 2C_T \mathrm{Ent}_\mu(e^f) \leqslant \frac{16C_T^2}{(1-KC_T)^2}I(e^f), \]
which gives the $\mathrm{rW\hspace*{-0.5mm}_2I}(2C_T)$.

On the other hand, if $\mathrm{rW\hspace*{-0.5mm}_2I}(C)$ holds, we adjust the proof of Theorem \ref{thmW2H} to get $\mathrm{W\hspace*{-0.5mm}_2H}$. Let $\delta>0$, given any bounded Lipschitz function $h$, we introduce for all $x\in E$ and $t\in [0,\frac12]$
  \[\psi(x,t) = \delta t P_{1-t}h(x) - \delta t \mu (P_1h), \ \ \ \Lambda=\mu (e^{\psi}).  \]
It follows due to $P_{1-t}h = Q_{t}P_1h$
  \beqn
     \frac{\mathrm{d}\Lambda}{\mathrm{d}t} &=&  \int e^{\psi} \left( \delta P_{1-t}h + \delta t \frac{\mathrm{d}P_{1-t}h}{\mathrm{d}t} - \delta \mu (P_1h) \right) \mathrm{d}\mu \nonumber\\
     &=& \frac{1}{\delta t} \int \delta^2tQ_t\left(P_1h-\mu (P_1h)\right) e^{\psi} - 2|\nabla e^{\psi/2}|^2 \mathrm{d}\mu. \label{eqLambdaDerivative2}
  \eeqn
Let $g=P_1h-\mu (P_1h)$, since $\psi$ is $\delta$-semi-convex, we have by using $\mathrm{rW\hspace*{-0.5mm}_2I}(C)$ for any $0< \delta \leqslant \frac{1}{2C}$
  \beqn
   \int \delta^2 t Q_tg e^{\psi}\mathrm{d}\mu &=& \delta^2\int Q_1(tg) e^{\psi}\mathrm{d}\mu \nonumber\\
   &\leqslant& \frac{\delta^2 \mu (e^{\psi})}{2} W_2\left(\frac{e^\psi}{\mu (e^{\psi})}\mu, \mu\right)^2 \nonumber\\
   &\leqslant& \frac{2\delta^2C^2}{(1-\delta C)^2} I(e^\psi) \ \leqslant \ 2I(e^\psi). \label{eqEstRW2I}
  \eeqn
Combining (\ref{eqLambdaDerivative2}-\ref{eqEstRW2I}) gives $\frac{\mathrm{d}\Lambda}{\mathrm{d}t}\leqslant 0$, which implies due to $P_{1-t}h \geqslant h$
  \beq
   1 =\Lambda(0) \geqslant \Lambda\left(\frac12\right) &=& \mu \left(\exp\left\{ \frac{\delta}{2} \left(P_{\frac12}h - \mu (P_1h)\right) \right\} \right) \\ 
      &\geqslant& \mu \left( \exp\left\{ \frac{\delta}{2}h - \mu \left(P_{\frac{2}{\delta}} \left(\frac{\delta}{2}h\right)\right) \right\} \right). 
  \eeq
Hence, we obtain $\mathrm{W\hspace*{-0.5mm}_2H}(\frac{2}{\delta})$ by the Bobkov-G\"{o}tze's supremum-convolution criterion (see \cite[Theorem 3.1]{GRS}). Choosing $\delta = \frac{1}{2C}$ gives $\mathrm{W\hspace*{-0.5mm}_2H}(4C)$.
\end{proof}

\subsection*{Acknowledgements}

{\small It is my great pleasure to thank Prof. Li-Ming Wu for his warm encouragement. And I deeply appreciate the anonymous reviewer for his/her conscientious reading and many suggestions on the first version. This work is supported by NSFC (no. 11201456, no. 1143000182, no. 11371352), AMSS research grant (no. Y129161ZZ1), and Key Laboratory of Random Complex Structures and Data, Academy of Mathematics and Systems Science, Chinese Academy of Sciences (No. 2008DP173182).}

%{\small The authors sincerely thank}

% The above example implies that $P^n$ is asymptotic stable on $X$, but the unique ergodic support is $\{0\}$, which has no interior. So Condition $(\mathbf{A}1)$ is not necessary, however, the next example shows that it is sharp.

%\begin{example} \label{exm02}
%Let $\mathbf{\Psi}_n = \{(x,y): x\in [0,1], y=2^{-n}\}$ and $\mathbf{\Psi}_* = \{(x,y): x\in [0,1], y=0\}$. Define $X= \mathbf{\Psi}_* \cup \mathbf{\Psi}_1 \cup \mathbf{\Psi}_2 \cdots$ and the transition probability
%  \[ P((x,y),(2x,y/2)) = 1,  \ \ \ 2x = x+x \;(\mathrm{mod}\; 1). \]
%Let $z=0$, then $P^n$ is eventual continuous at $z$ but not equicontinuous.
%\end{example}

% BibTeX users please use
% \bibliographystyle{}
% \bibliography{}
%
% Non-BibTeX users please use

\end{document}